\newtheorem{theorem}{Theorem}
\newtheorem{lemma}[theorem]{Lemma}
\newtheorem{corollary}[theorem]{Corollary}
\newtheorem{proposition}[theorem]{Proposition}
\begin{document}





\title{The $p$-Gelfand Phillips Property in Spaces of Operators and Dunford-Pettis like sets}

\author{Ioana Ghenciu}
\address{ Mathematics Department\\
 University of Wisconsin-River Falls\\
   Wisconsin, 54022}
\email{ioana.ghenciu@uwrf.edu}

\subjclass[2010]{Primary:46B20; Secondary: 46B25, 46B28}

\keywords{The $p$-Gelfand Phillips property; $p$-convergent operators; the reciprocal Dunford-Pettis property of order $p$}

\maketitle

\begin{abstract}
The $p$-Gelfand Phillips property ($1\le p<\infty$) is studied in spaces of operators. 
Dunford - Pettis type like sets are studied in  Banach spaces. We discuss Banach spaces $X$ with the  property that every $p$-convergent operator $T:X\to Y$ is weakly compact, for every Banach space $Y$. 
\end{abstract}




 \section{Introduction} 
\vspace{0.1in}



Numerous papers have investigated whether spaces of operators inherit the  Gelfand-Phillips property when the co-domain and the dual of the domain possess the respective property; e.g., see  \cite{DE},   \cite{EJMPA}, \cite{GAFA}, and \cite{SM}. 
In \cite{FZ2} the authors introduced the $p$-Gelfand-Phillips property ($1\le p<\infty$), a property which is in general weaker than the Gelfand-Phillips property. 
In this paper limited $p$-convergent evaluation operators are used  to study the $p$-Gelfand-Phillips property in spaces of operators. 
 We show that if  $Y$ has the Gelfand-Phillips property and $M$ is a closed subspace of $K_{w^*}(X^*,Y)$ such that  the evaluation operator $\psi_{y^*}:M\to X$ is limited $p$-convergent  for each $y^*\in Y^*$,  then $M$ has the $p$-Gelfand-Phillip property.
We prove that if $X^*$  has the $p$-Gelfand-Phillip property and $Y$ has the Schur property, and  $M$ is a closed subspace of $L(X,Y)$, then $M$ has the $p$-Gelfand-Phillip property  ($1\le p<\infty$). We also prove that if  $L_{w^*}(X^*,Y)$ has  the $p$-Gelfand-Phillip property, 
then at least one of the spaces $X$ and $Y$ does not contain $\ell_2$. 


We  study \emph{weakly-p-Dunford-Pettis sets} and  \emph{weakly-p-L-sets} ($1\le p< \infty$).  
We show that  every  operator with $p$-convergent adjoint  is weakly precompact and that weakly-$p$-Dunford-Pettis sets are weakly precompact (for $2< p< \infty$).
	We prove that a  bounded subset $A$ of $X^*$  is an weakly-$p$-$L$-subset of $X^*$ if and only if $T^*(A)$ is relatively compact  whenever $Y$ is a Banach space and  $T:Y \to X$ is a  weakly-$p$-precompact operator ($1\le p< \infty$). 
	
	We  study two Banach space properties, called the \emph{reciprocal Dunford-Pettis property of order p} (or  \emph{property} $RDP_p$) and \emph{property} $RDP^*_p$ ($1\le p< \infty$).

We prove that a Banach space $X$ has property $RDP_p$ ($1\le p< \infty$) if and only if for every Banach space $Y$, every $p$-convergent operator $T:X\to Y$  is weakly compact. 
 We also prove that a Banach space $X$ has property $RDP^*_p$ ($1\le p< \infty$) if and only if for every Banach space $Y$, every operator $T:Y\to X$  with  $p$-convergent adjoint   is weakly compact. These results are motivated by results in \cite{CS}. 


\section{Definitions and Notation} 
\vspace{0.1in}

Throughout this paper, $X$, $Y$, $E$ and $F$ will denote Banach spaces. The unit ball of $X$ will be denoted by $B_X$    and $X^*$ will denote the continuous linear dual of $X$. 
The space $X$ embeds in $Y$ (in symbols $X\hookrightarrow Y$) if $X$ is isomorphic to a closed subspace of $Y$. 
An operator $T: X \to Y$ will be a continuous and linear function. 
The set of all operators, weakly compact operators, and compact operators from $X$ to $Y$ will be denoted by $L(X,Y)$, $W(X,Y)$, and $K(X,Y)$. 
The $w^* - w$ continuous (resp. compact) operators from $X^*$ to $Y$ will be denoted by $L_{w^*}(X^*, Y)$ (resp.  $K_{w^*}(X^*, Y)$). 
The injective tensor product of two Banach spaces $X$ and $Y$ will be denoted by $X\otimes_\epsilon Y$.
The space $X\otimes_\epsilon Y$ can be embedded into the space $K_{w^*}(X^*,Y)$, by identifying $x\otimes y$ with the rank one operator $x^*\to \langle x^*,x\rangle \, y$ (see \cite{DU} for the theory of tensor products).


A subset $S$ of $X$ is said to be \emph{weakly precompact} provided that every  sequence from $S$ has a weakly Cauchy subsequence. An operator   $T:X\to Y$ is called \emph{ weakly precompact (or almost weakly compact)} if $T(B_X)$ is weakly precompact. 
	
	An operator $T:X\to Y$ is called \emph{completely continuous (or Dunford-Pettis)} if $T$ maps weakly convergent sequences to norm convergent sequences. 

A Banach space $X$ has the \emph{Dunford-Pettis property (DPP)} if every weakly compact operator $T:X \to Y$ is completely continuous, for any Banach space $Y$. If $X$ is a $C(K)$-space or an $L_1$-space, then $X$ has the $DPP$. The reader can check \cite{JDSS} and \cite{DU} for results related to the $DPP$.

For $1\le p<\infty$, $p^*$ denotes the conjugate  of $p$. If $p=1$, $\ell_{p^*}$ plays the role of $c_0$. The unit vector basis of $\ell_p$ will be denoted by $(e_n)$. 


Let $1\le p\le \infty$. A sequence $(x_n)$ in $X$  is called  \emph{weakly p-summable}  if $(\langle x^*, x_n\rangle )\in \ell_p$ for each $x^*\in X^*$ \cite[p. 32]{DJT}. 
Let $\ell_p^w(X)$ denote the  set of all weakly $p$-summable sequences in $X$. The space $\ell_p^w(X)$ is a Banach space with the norm 
$$ \|(x_n)\|_p^w=\sup \{( \sum_{n=1}^\infty |\langle x^*, x_n \rangle |^p )^{1/p}: x^*\in B_{X^*}\}$$



We recall the following  isometries: 
 $L(\ell_{p^*},X) \simeq \ell_p^w(X)$ for $1<p<\infty$; $ L(c_0,X)\simeq \ell_p^w(X)  $ for $p=1$; 
$T\to (T(e_n))$ \cite{GroSP},  \cite[Proposition 2.2, p. 36]{DJT}.


A series $\sum x_n$ in $X$ is said to be \emph{weakly unconditionally convergent (wuc)} if for every $x^*\in X^*$, the series $\sum |x^*(x_n)|$ is convergent. An operator $T:X\to Y$ is  \emph{unconditionally converging} if it maps weakly unconditionally convergent series  to unconditionally convergent ones. 


Let $1\le p\le \infty$. An operator $T:X\to Y$ is called \emph{p-convergent} if $T$ maps weakly $p$-summable sequences into norm null sequences. The set of all $p$-convergent operators is denoted by $C_p(X,Y)$ \cite{CS}. 

The $1$-convergent operators are precisely the unconditionally converging operators and the $\infty$-convergent operators are precisely the completely continuous operators. 
If $p<q$, then $C_q(X,Y)\subseteq C_p(X,Y)$. 

	A sequence $(x_n)$ in $X$ is called \emph{weakly-p-convergent} to $x\in X$ if the sequence $(x_{n}-x)$ is weakly $p$-summable \cite{CS}. The weakly-$\infty$-convergent sequences are precisely the weakly convergent sequences. 
	
	Let $1\le p\le \infty$. A bounded subset $K$ of $X$ is \emph{relatively weakly-$p$-compact} (resp. \emph{ weakly-p-compact}) if every sequence in $K$ has a weakly-$p$-convergent subsequence with limit in $X$ (resp. in $K$). An operator $T:X\to Y$ is \emph{weakly-$p$-compact} if $T(B_X)$ is relatively weakly-$p$-compact \cite{CS}.
The set of weakly-$p$-compact operators $T:X\to Y$ will be denoted by $W_p(X,Y)$.

 If $p<q$, then $W_p(X,Y)\subseteq W_q(X,Y)$. A Banach space $X\in C_p$ (resp. $X\in W_p$) if $id(X)\in C_p(X,X)$  (resp. $id(X)\in W_p(X,X)$) \cite{CS}, where $id(X)$ is the identity map on $X$.
	
	Let $1\le p\le \infty$. A sequence $(x_n)$ in $X$ is called \emph{weakly-p-Cauchy} if $(x_{n_k}-x_{m_k})$ is weakly $p$-summable for any increasing sequences $(n_k)$ and $(m_k)$ of positive integers. 
	
	Every weakly-$p$-convergent sequence is weakly-$p$-Cauchy, and the weakly $\infty$-Cauchy sequences are precisely the weakly Cauchy sequences.  
	
 	Let $1\le p\le \infty$. We say that a subset $S$ of $X$ is called \emph{weakly-$p$-precompact} if  every  sequence from $S$ has a weakly-$p$-Cauchy subsequence. The weakly-$\infty$-precompact sets are precisely the weakly precompact sets. 
	
Let $1\le p\le \infty$.	An operator   $T:X\to Y$ is called \emph{ weakly-$p$-precompact  } if $T(B_X)$ is weakly-$p$-precompact. The set of all weakly-$p$-precompact operators $T:X\to Y$ is denoted by $WPC_p(X,Y)$. We say that $X\in WPC_p$ if $id(X)\in WPC_p(X,X)$. 

The weakly-$\infty$-precompact operators are precisely the weakly precompact operators. If $p<q$, then $\ell_p^w(X)\subseteq \ell_q^w(X)$, thus $WPC_p(X,Y) \subseteq WPC_q(X,Y)$. 



	The bounded subset $A$ of $X$ is called a \emph{Dunford-Pettis} (resp. \emph{limited})  subset  of $X$ if each weakly null (resp. $w^*$-null) sequence $(x_n^*)$ in $X^*$ tends to $0$ uniformly on $A$; i.e.
$$
\sup_{x \in A} |x_n^* (x) | \to  0.
$$
Every DP (resp. limited) subset of $X$ is weakly precompact \cite{KA}, \cite[p. 377]{HR} (resp. \cite{BD}, \cite{TS}).

The sequence $(x_n)$ in $X$ is called  limited if the corresponding set of its terms is a limited set. If the sequence $(x_n)$ is also weakly null (resp.  weakly $p$-summable), then $(x_n)$ is called a limited weakly null (resp.  limited weakly $p$-summable) sequence in $X$. 

The space $X$ has the \emph{Gelfand-Phillips (GP) property} (or is a \emph{Gelfand-Phillips space}) if every limited subset of $X$ is relatively compact. 

Banach spaces having  the Gelfand-Phillips property include, among others,
 Schur spaces, separably complemented spaces, spaces with $w^*$-sequential compact dual unit balls, separable spaces,  reflexive spaces,   spaces whose duals do not contain $\ell_1$,  and dual spaces $X^*$ whith $X$  not containing $\ell_1$    (\cite{BD},  \cite{EBPA}, \cite[p. 31]{TS}). 


A Banach space $X$ has the $DP^*$-property ($DP^*P$) if all weakly compact sets in $X$ are limited \cite{CGL}. The space  $X$ has the $DP^*$-property if and only if $L(X, c_0)=CC(X, c_0)$ \cite{CGL}, \cite{IGCC}. 
If $X$ is a Schur space or if $X$ has the $DPP$ and the Grothendieck property, then $X$ has the $DP^*P$. 

 Let $1\le p\le \infty$. A Banach space $X$ has the \emph{Dunford-Pettis property of order p}  $(DPP_p)$ ($1\le p\le \infty$) if every weakly compact operator $T:X \to Y$ is $p$-convergent, for any Banach space $Y$ \cite{CS}.


Let $1\le p\le \infty$. A Banach space $X$ has the $DP^*$-\emph{property of order} $p$  ($DP^*P_p$) if all weakly-$p$-compact sets in $X$ are limited \cite{FZ}. 

If $X$ has the  $DP^*P$, then  $X$ has the $DP^*P_p$, for all $1 \le p \le \infty$. If $X$ has the $DP^*P_p$, then $X$ has the $DPP_p$.

Let $1\le p<\infty$. A Banach space $X$ has the $p$-\emph{Gelfand-Phillips ($p$-$GP$)  property} (or is a $p$-\emph{Gelfand-Phillips space}) if every  limited weakly $p$-summable  sequence in $X$ is norm null \cite{FZ2}. 

If $1\le p<q$ and $X$ has the $q$-$GP$ property, then $X$ has the $p$-$GP$ property. If $X$ has the $GP$ property, then $X$ has the $p$-$GP$ property for any $1\le p<\infty$. Separable spaces with the $DP^*P_p$ have the $p$-$GP$ property \cite{FZ2}.  

An operator $T:X\to Y$ is called \emph{limited $p$-convergent} if it carries limited weakly $p$-summable sequences in $X$ to norm null ones in $Y$ \cite{FZ2}.

The bounded subset  $A$ of $X^*$ is called an \emph{$L$-subset} of $X^*$ if each weakly null sequence $(x_n)$ in $X$ tends to $0$ uniformly on $A$.


A bounded subset $A$ of $X^*$ (resp. of $X$) is called a  $V$-subset of $X^*$ (resp. a $V^*$-subset of $X$) provided that
$$ \sup \{ \, |x^* (x_n )|: x^* \in A \, \} \to 0 $$
$$(\text{resp.} \, \sup\{ \, |x_n^* (x) |: x \in A \,\} \to 0)$$
for each  wuc series $\sum x_n$ in $X$ (resp. $\sum x_n^*$ in $X^*$).

The Banach space $X$ has property $(V)$ (resp. $(V^*)$) if  every $V$-subset of $X^*$ (resp. $V^*$-subset of $X$)  is relatively weakly compact. The following results were  established in \cite{AP}: $C(K)$ spaces  have property $(V)$; reflexive Banach spaces have both properties $(V)$ and $(V^*)$;  $L_1$ - spaces have property $(V^*)$;   
 $X$  has property $(V)$ if  and only if every unconditionally converging operator $T$ from $X$ to any Banach space $Y$ is weakly compact.

Let $1\le p< \infty$. We say that a bounded subset $A$ of $X$ is called a \emph{weakly-p-Dunford-Pettis} set if for all  weakly $p$-summable sequences $(x_n^*)$ in $X^*$,
$$\sup_{x\in A} |x_n^*(x)|\to 0. $$

Let $1\le p< \infty$. We say that a bounded subset $A$ of $X^*$ is called a \emph{weakly-p-L}-set  if for all  weakly $p$-summable sequences $(x_n)$ in $X$,
$$\sup_{x^*\in A} |x^*(x_n)|\to 0. $$





The weakly-$1$-$L$-subsets of $X^*$ are precisely the $V$-subsets and the weakly-$1$-Dunford-Pettis subsets of $X$ are precisely the $V^*$-subsets. If $p<q$, then a weakly-$q$-$L$-subset is a weakly-$p$-$L$-subset, since $\ell_p^w(X)\subseteq \ell_q^w(X)$. Similarly, a weakly-$q$-DP set is a weakly-$p$-DP set, if $p<q$.  

	
	The  Banach space $X$ has the \emph{reciprocal Dunford-Pettis}  ($RDP$) property if every completely continuous  operator $T$ from $X$ to any Banach space $Y$ is weakly compact \cite[p. 153]{Gro}. 
The  space $X$ has the $RDP$ property if and only if every $L$-subset of $X^*$ is relatively weakly compact  \cite{TL}, \cite{GLC}.   Banach spaces with property $(V)$ of Pe{\l}czy\'nski, in particular reflexive spaces and $C(K)$ spaces, have the $RDP$ property \cite{AP}. 
A Banach space $X$ does not contain $\ell_1$ if and only if  every $L$-subset of $X^*$ is relatively compact  if and only if  every DP subset of $X^*$ is relatively compact \cite{EBPA}.

 The Banach space $X $ has property $RDP^*$ if every DP subset of $X$ is relatively weakly compact \cite{BLO}. The space $X$ has $RDP^*$ whenever $X$ has property $(V^*)$ or $X$ is weakly sequentially complete \cite{BLO}. Also, $X^*$ has $RDP^*$ whenever $X$ has property $(V)$.

Let $1\le p< \infty$. 
We say that the space $X$ has the \emph{reciprocal Dunford-Pettis  property  of order p} or  $RDP_p$ (resp. the \emph{weak reciprocal Dunford-Pettis  property of order p} or $wRDP_p$) if every weakly-$p$-$L$-subset of $X^*$ is relatively weakly compact (resp. weakly precompact). 

If $X$ has the $RDP_p$ property, then $X$ has the $RDP$ property (since any $L$-subset of $X^*$ is a weakly-$p$-$L$-set). 
If $p<q$ and $X$ has the  $RDP_p$ property, then $X$ has the  $RDP_q$ property.

We say that $X$ has  property $RDP^*_p$   if every weakly-$p$-Dunford Pettis subset of $X$ is relatively weakly compact.

 If $p<q$ and $X$ has property $RDP^*_p$, then $X$ has property $RDP^*_q$. 
If $X$ has property $RDP^*_p$, then $X$ has property $RDP^*$  (since every DP subset of $X$ is a weakly-$p$-Dunford Pettis set). 
Note that $c_0$ does not have property $RDP_p^*$, since it does not have property $RDP^*$.  Consequently, if $X$ has property $RDP^*_p$, then $X$ does not contain a copy of $c_0$.


\section{The $p$-Gelfand Phillips property in spaces of  operators}

In the following we give   sufficient conditions for the $p$-$GP$ property  of some spaces of operators in terms of the limited $p$-convergence of the evaluation operators. 


We refer to \cite{DE} for the following two facts:

(A) A sequence $(x_n)$ is limited if and only if $x_n^*(x_n)\to 0$ for each $w^*$-null sequence $(x_n^*)$ in $X^*$. 

(B) A Banach space $X$ has the GP property if and only if every  limited weakly null  sequence in $X$ is norm null. 


We recall the following well-known isometries (\cite[p.60]{Ru}): \\
\noindent 1) $L_{w^*}(X^*,Y)\simeq L_{w^*}(Y^*,X)$, $K_{w^*}(X^*,Y)\simeq K_{w^*}(Y^*,X)$  ($T\to T^*$)  \\
2) $W(X,Y)\simeq  L_{w^*}(X^{**},Y) $ and $K(X,Y) \simeq K_{w^*}(X^{**},Y)$ ($T\to T^{**})$.

Suppose that $X$ and $Y$ are Banach spaces and $M$ is a closed subspace of $L(X,Y)$. If
  $x\in X$ and $y^*\in Y^*$,  the evaluation operators $\phi_x:M\to Y$ and $\psi_{y^*}:M\to X^*$ are defined by
$$ \phi_x(T)=T(x), \, \psi_{y^*}(T)=T^*(y^*), T\in M. $$


\begin{theorem} \label{T1}
 Let $1\le p<\infty$. 

(i) Suppose that $Y$ has the $GP$ property. If $M$ is a closed subspace of $K_{w^*}(X^*,Y)$ such that  the evaluation operator
$\psi_{y^*}:M\to X$ is limited $p$-convergent  for each $y^*\in Y^*$,  then $M$ has the $p$-$GP$ property.
	
	(ii) Suppose that $Y$ has the $GP$ property.  If $M$ is a closed subspace of $K(X,Y)$ such that
  the evaluation operator $\psi_{y^*}:M\to X^*$ is limited $p$-convergent for each $y^*\in Y^*$, then $M$ has the $p$-$GP$ property.
\end{theorem}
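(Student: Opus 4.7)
The plan is to handle both parts simultaneously by contradiction, reducing to the Gelfand--Phillips property of $Y$ via fact (A). Suppose $(T_n)\subseteq M$ is limited and weakly $p$-summable with $\|T_n\|\not\to 0$; after passing to a subsequence, $\|T_n\|>\varepsilon$ for some $\varepsilon>0$. Each $T_n$ is compact, so one can choose $\xi_n$ in the unit ball of the relevant domain ($B_X$ in part (ii), $B_{X^*}$ in part (i)) with $\|T_n\xi_n\|_Y>\varepsilon/2$. The idea is to show $(T_n\xi_n)$ is a limited weakly null sequence in $Y$, whence the Gelfand--Phillips property of $Y$ together with fact (B) forces $\|T_n\xi_n\|\to 0$, contradicting the choice of $\xi_n$.

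Weak nullity of $(T_n\xi_n)$ follows from the hypothesis on the evaluation maps: for any $y^*\in Y^*$, $\langle y^*,T_n\xi_n\rangle=\langle\psi_{y^*}(T_n),\xi_n\rangle$, and $\|\psi_{y^*}(T_n)\|\to 0$ because $\psi_{y^*}$ is limited $p$-convergent and $(T_n)$ is limited weakly $p$-summable in $M$. For limitedness in $Y$, by fact (A) it suffices to verify $\langle y_n^*,T_n\xi_n\rangle\to 0$ for every $w^*$-null sequence $(y_n^*)\subseteq Y^*$. I introduce the uniformly bounded functionals $F_n\in M^*$ defined by $F_n(T)=\langle y_n^*,T\xi_n\rangle$; if $(F_n)$ can be shown to be $w^*$-null in $M^*$, then the limitedness of $(T_n)$ in $M$ yields $F_n(T_n)=\langle y_n^*,T_n\xi_n\rangle\to 0$, as desired.

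The one technical step is verifying $F_n(T)\to 0$ for each fixed $T\in M$. This is where compactness of $T$ enters: since $T$ maps the unit ball of its domain into a relatively compact subset of $Y$, every subsequence of $(T\xi_n)$ admits a further sub-subsequence with $T\xi_{n_{k_j}}\to z$ in norm for some $z\in Y$. Splitting $F_{n_{k_j}}(T)=\langle y_{n_{k_j}}^*,z\rangle+\langle y_{n_{k_j}}^*,T\xi_{n_{k_j}}-z\rangle$, the first summand tends to zero by $w^*$-nullity of $(y_n^*)$ and the second by boundedness of $(y_n^*)$ combined with norm convergence of $(T\xi_{n_{k_j}})$. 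The subsequence principle then forces $F_n(T)\to 0$. This compactness/subsequence step is the main (though mild) obstacle in the proof; everything else is bookkeeping with the definitions.
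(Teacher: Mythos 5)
Your proof is correct and follows essentially the same route as the paper: assume a non-norm-null limited weakly $p$-summable sequence $(T_n)$, use the evaluation hypothesis to make $(T_n\xi_n)$ weakly null, test against the tensor functionals $x^*\otimes y^*$ (your $F_n$) to make it limited, and invoke the $GP$ property of $Y$. The only cosmetic differences are that you verify the $w^*$-nullity of $(F_n)$ by a subsequence/norm-compactness argument instead of citing the $w^*$-to-norm sequential continuity of $T^*$ for compact $T$, and that you treat (ii) directly rather than via the isometry $K(X,Y)\simeq K_{w^*}(X^{**},Y)$.
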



\begin{proof} 
(i) Suppose not and let $(T_n)$ be a limited weakly $p$-summable sequence in  $M$ such that $\|T_n \| = 1$ for each $n$.  Let $(x_n^*)$ be a sequence in $B_{X^*}$  so that $\|T_n (x_n^*)\| > 1/2$ for each $n$.  

Let $y^*\in Y^*$.  Since $\psi_{y^*}:M \to X$ is limited $p$-convergent, $(T_n^*(y^*))=(\psi_{y^*}(T_n))$ is  norm null. Hence $\langle T_n(x_n^*), y^*\rangle = \langle T_n^*(y^*), x_n^*  \rangle\le \|T_n^*(y^*)\|\to 0$. Thus 
$(T_n(x_n^*))$ is weakly null.

Let $(y_n^*)$ be a $w^*$-null sequence in $Y^*$. 
Let $T \in K_{w^*}(X^*,Y) $.   Since $T^*$ is $w^*$-norm sequentially continuous,
$$\langle x_n^* \otimes y_n^* , T\rangle \leq \|T^*(y_n^*) \|\to 0.$$ 
Thus $(x_n^* \otimes y_n^*)$ is $w^*$- null in $(K_{w^*}(X^*,Y) )^*$. Since $(T_n)$ is limited in $K_{w^*}(X^*,Y)$, 
$$ \langle x_n^* \otimes y_n^* , T_n\rangle = \langle T_n(x_n^*) , y_n^*  \rangle\to 0. $$ 
Thus $(T_n(x_n^*))$ is limited.  
 Then $(T_n(x_n^*))$ is norm null, since $Y$ has the $GP$ property. This contradiction concludes the proof. 

(ii) Apply (i) and the isometry $K(X,Y)\simeq K_{w^*}(X^{**},Y)$. 
\end{proof}

We note that if $X$ has the $p$-$GP$ property, then any operator $T:E\to X$ is 
limited $p$-convergent. Indeed, if $(x_n)$ is limited weakly $p$-summable, then 
$(T(x_n))$ is limited weakly $p$-summable, and thus norm null. Thus, if $X$  has the $p$-$GP$ property and $M$ is a closed subspace of $K_{w^*}(X^*,Y)$, then the evaluation operator  $\psi_{y^*}:M \to X$ is limited $p$-convergent for each $y^*\in Y^*$.

\begin{corollary} \label{Coro1}
 Let $1\le p<\infty$. 

(i) Suppose $X$  has the $p$-$GP$ property and  $Y$ has the $GP$ property. If $M$ is a closed subspace of $K_{w^*}(X^*,Y)$, then $M$ has the $p$-$GP$ property.

(ii)  Suppose $Y$  has the $p$-$GP$ property and  $X$ has the $GP$ property. If $M$ is a closed subspace of $K_{w^*}(X^*,Y)$, then $M$ has the $p$-$GP$ property.
\end{corollary}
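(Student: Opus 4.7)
The plan is to read both statements directly off Theorem \ref{T1}(i), using the observation (stated in the paragraph just above the corollary) that whenever $X$ has the $p$-$GP$ property, every bounded operator into $X$ is automatically limited $p$-convergent. No new computation is required; the only real content is identifying the right hypotheses.

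For part (i), I would fix $y^*\in Y^*$ and note that the evaluation operator $\psi_{y^*}:M\to X$ maps into a space with the $p$-$GP$ property, hence is limited $p$-convergent by the preceding remark. Combined with the hypothesis that $Y$ has the $GP$ property, this is exactly the input required by Theorem \ref{T1}(i), and the conclusion that $M$ has the $p$-$GP$ property follows immediately.

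For part (ii), I would transfer the problem to the setting of (i) via the isometry $K_{w^*}(X^*,Y)\simeq K_{w^*}(Y^*,X)$ recorded as fact 1) just before Theorem \ref{T1}. Under this isometry, the closed subspace $M$ of $K_{w^*}(X^*,Y)$ corresponds to a closed, isometric copy $M'$ of a subspace of $K_{w^*}(Y^*,X)$. The $p$-$GP$ property is defined via sequences by norm behaviour and weak-topology behaviour, hence is an isometric invariant, so it is enough to prove that $M'$ has the $p$-$GP$ property. But now $Y$ plays the role of the ``domain dual'' space (and has the $p$-$GP$ property by hypothesis), while $X$ plays the role of the range (and has the $GP$ property), so part (i) applied with the roles of $X$ and $Y$ interchanged delivers the conclusion for $M'$, and therefore for $M$. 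The only step with any structural flavour is the invocation of the isometry, which is completely standard; I do not anticipate a real obstacle beyond bookkeeping of which space plays which role after the swap.
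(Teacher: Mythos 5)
Your proof is correct and follows essentially the same route as the paper: part (i) is the remark preceding the corollary (operators into a $p$-$GP$ space are limited $p$-convergent) combined with Theorem \ref{T1}(i), and part (ii) is the transfer via the isometry $K_{w^*}(X^*,Y)\simeq K_{w^*}(Y^*,X)$ followed by an application of (i) with the roles of $X$ and $Y$ interchanged.
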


\begin{proof}
(i) Since $X$  has the $p$-$GP$ property, $\psi_{y^*}:M \to X$ is limited $p$-convergent for each $y^*\in Y^*$.  Apply Theorem \ref{T1}. 

(ii) Apply  (i) and the isometry $K_{w^*}(X^{*},Y)\simeq K_{w^*}(Y^{*},X)$. 
\end{proof}

\begin{corollary} \label{Cinj}
 Let $1\le p<\infty$. Suppose $X$  has the $p$-$GP$ property and  $Y$ has the $GP$ property (or $X$ has the $GP$ property and  $Y$  has the $p$-$GP$ property). Then $X\otimes_\epsilon Y$ has the $p$-$GP$ property.
\end{corollary}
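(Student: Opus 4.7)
The plan is to recognize that this corollary is essentially a direct specialization of Corollary \ref{Coro1}. Section 2 already records that $X\otimes_\epsilon Y$ embeds into $K_{w^*}(X^*,Y)$ via the identification $x\otimes y\mapsto (x^*\mapsto \langle x^*,x\rangle\, y)$; since this embedding is isometric and the (completed) injective tensor product is a Banach space, its image is a closed subspace of $K_{w^*}(X^*,Y)$.

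First I would set $M$ to be this closed isometric copy of $X\otimes_\epsilon Y$ inside $K_{w^*}(X^*,Y)$. Under the first hypothesis of the statement, namely $X$ has the $p$-$GP$ property and $Y$ has the $GP$ property, Corollary \ref{Coro1}(i) applies verbatim and yields that $M$, hence $X\otimes_\epsilon Y$, has the $p$-$GP$ property. Under the second hypothesis, namely $X$ has the $GP$ property and $Y$ has the $p$-$GP$ property, Corollary \ref{Coro1}(ii) is the correct invocation and gives the conclusion symmetrically.

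There is no real obstacle to the argument, since all the substantive work has already been carried out: in Theorem \ref{T1}, which controls limited weakly-$p$-summable sequences in closed subspaces of $K_{w^*}(X^*,Y)$ via the evaluation operators $\psi_{y^*}$; in Corollary \ref{Coro1}, which packages Theorem \ref{T1} into a statement purely about the $GP$ and $p$-$GP$ properties of $X$ and $Y$; and in the standard tensor-product fact that $X\otimes_\epsilon Y$ sits as a closed subspace of the space of $w^*$-$w$ compact operators. Nothing beyond quoting these results and identifying $X\otimes_\epsilon Y$ with its image in $K_{w^*}(X^*,Y)$ is required, so the proof will be a one-line reduction.
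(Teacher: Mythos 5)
Your proposal is correct and is essentially identical to the paper's proof: both identify $X\otimes_\epsilon Y$ with its closed isometric copy inside $K_{w^*}(X^*,Y)$ and then invoke Corollary \ref{Coro1} (parts (i) and (ii) covering the two symmetric hypotheses). Nothing is missing.
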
 

\begin{proof}
The space $X\otimes_\epsilon Y$ can be embedded into the space $K_{w^*}(X^*,Y)$, by identifying $x\otimes y$ with the rank one operator $x^*\to \langle x^*,x\rangle \, y$. Apply Corollary \ref{Coro1}. 
\end{proof}

\begin{corollary} \label{Coro4}
 Let $1\le p<\infty$.
Suppose $X^*$  has the $p$-$GP$ property and  $Y$ has the $GP$ property (or $X^*$  has the $GP$ property and  $Y$ has the $p$-$GP$ property). If $M$ is a closed subspace of $K(X,Y)$, then $M$ has the $p$-$GP$ property.
\end{corollary}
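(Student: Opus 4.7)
The plan is to reduce Corollary \ref{Coro4} to Corollary \ref{Coro1} by means of the canonical isometry $K(X,Y) \simeq K_{w^*}(X^{**},Y)$ given by $T \mapsto T^{**}$, already recorded at the beginning of Section 3.

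First I would transfer the setup via this isometry: the closed subspace $M \subseteq K(X,Y)$ corresponds to a closed subspace $\widetilde{M}$ of $K_{w^*}(X^{**},Y)$. Writing $X^{**}=(X^{*})^{*}$, we may regard $\widetilde{M}$ as a closed subspace of $K_{w^*}((X^{*})^{*},Y)$, which is precisely the setting of Corollary \ref{Coro1} with $X^{*}$ playing the role of $X$.

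Next I would invoke Corollary \ref{Coro1}. In the first case of the hypothesis, $X^{*}$ has the $p$-$GP$ property and $Y$ has the $GP$ property, so part (i) of that corollary yields that $\widetilde{M}$ has the $p$-$GP$ property. In the second case, $X^{*}$ has the $GP$ property and $Y$ has the $p$-$GP$ property, so part (ii) delivers the same conclusion. Finally, since $M$ and $\widetilde{M}$ are isometrically isomorphic and the $p$-$GP$ property is obviously preserved under linear isomorphism (weak $p$-summability, limitedness, and norm-nullness of sequences are all isomorphic invariants), the $p$-$GP$ property transfers back to $M$.

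There is essentially no obstacle here: the substantive work was done in Corollary \ref{Coro1}, and the entire content of this corollary lies in realizing $K(X,Y)$ as a subspace of $K_{w^*}((X^{*})^{*},Y)$ so that Corollary \ref{Coro1} applies with $X^{*}$ in place of $X$.
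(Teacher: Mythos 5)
Your proposal is correct and is exactly the paper's own argument: the author also proves this by applying Corollary \ref{Coro1} together with the isometry $K(X,Y)\simeq K_{w^*}(X^{**},Y)$, with $X^*$ playing the role of $X$. You have simply spelled out the transfer in more detail.
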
 

\begin{proof}
Apply Corollary \ref{Coro1}  and the isometry $K(X,Y)\simeq K_{w^*}(X^{**},Y)$. 
\end{proof}

\begin{corollary}
 Let $1\le p<\infty$. If $X$ has the $p$-$GP$ property, then so has $\ell_1[X]$, the space of all unconditionally convergent series $\sum x_n$ in $X$ equipped with the norm $\|(x_n)\|=\sup\{ \sum |x^*(x_n)|:x^*\in B_{X^*} \}$. 
\end{corollary}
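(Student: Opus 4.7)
The plan is to identify $\ell_1[X]$ isometrically with the space of compact operators $K(c_0,X)$, and then apply Corollary~\ref{Coro4} with $c_0$ in the role of ``$X$'' and $X$ in the role of ``$Y$''.

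First I would set up the identification. To each $(x_n)\in \ell_1[X]$ I associate the operator $T_{(x_n)}:c_0\to X$, $a\mapsto \sum_{n} a_n x_n$. Unconditional convergence of $\sum x_n$ guarantees this series converges for every $a\in c_0$; more importantly, it makes the set of finite partial sums of $\sum x_n$ relatively compact in $X$, which is the classical characterization of compactness for the associated operator on $c_0$ (Bessaga--Pe{\l}czy\'nski). An interchange of suprema then yields
\[
\|T_{(x_n)}\|=\sup_{\|a\|_{c_0}\le 1}\sup_{\|x^*\|\le 1}\Bigl|\sum_n a_n x^*(x_n)\Bigr|=\sup_{\|x^*\|\le 1}\sum_n|x^*(x_n)|=\|(x_n)\|_{\ell_1[X]},
\]
so the assignment is an isometric embedding. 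Surjectivity onto $K(c_0,X)$ follows by sending $T\in K(c_0,X)$ to $(T(e_n))$, which lies in $\ell_1[X]$ because the finite subsums of $\sum T(e_n)$ form a relatively compact set. Hence $\ell_1[X]\simeq K(c_0,X)$ isometrically.

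With this identification in hand I would invoke Corollary~\ref{Coro4}, using its second alternative (``$X^*$ has the $GP$ property and $Y$ has the $p$-$GP$ property''). The dual $c_0^*=\ell_1$ is a Schur space and therefore has the $GP$ property, while $X$ has the $p$-$GP$ property by hypothesis. Corollary~\ref{Coro4} then gives that any closed subspace of $K(c_0,X)$, in particular $K(c_0,X)$ itself, has the $p$-$GP$ property. Transporting this conclusion through the isometry produces the $p$-$GP$ property for $\ell_1[X]$.

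The only non-routine ingredient is the isometric identification $\ell_1[X]\simeq K(c_0,X)$, but it is entirely classical; once it is set up, the corollary is immediate from the results already developed in the paper.
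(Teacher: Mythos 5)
Your proposal is correct and follows exactly the paper's route: the paper likewise identifies $\ell_1[X]$ isometrically with $K(c_0,X)$ (citing Emmanuele for this) and then applies Corollary~\ref{Coro4} using that $c_0^*\simeq\ell_1$ has the $GP$ property. The only difference is that you spell out the classical isometry rather than quoting it, which is fine.
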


\begin{proof}
It is known that $\ell_1[X]$ is isometrically isomorphic to $K(c_0,X)$ \cite{EARCH}. Since $c_0^*\simeq \ell_1$ has the $GP$ property, Corollary \ref{Coro4} gives the conclusion. 
\end{proof}

\begin{corollary}
 Let $1\le p<\infty$. If $\mu$ is a finite measure and  $X$ has the $p$-$GP$ property, then so has $L_1(\mu)\otimes_\epsilon X $. 
\end{corollary}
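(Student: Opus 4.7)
The plan is to obtain the conclusion as a direct consequence of Corollary \ref{Cinj}. Since the injective tensor product is symmetric up to isometric isomorphism, one has $L_1(\mu)\otimes_\epsilon X \cong X\otimes_\epsilon L_1(\mu)$, so to apply Corollary \ref{Cinj} it suffices to exhibit one of the two factors with the full $GP$ property and the other with the $p$-$GP$ property. The factor $X$ comes with the $p$-$GP$ property by hypothesis, so the entire task reduces to verifying that $L_1(\mu)$ is a Gelfand-Phillips space whenever $\mu$ is a finite measure.

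This last fact is a standard feature of $L_1$-spaces. In the typical setting where the measure algebra of $(\Omega,\Sigma,\mu)$ is countably generated modulo null sets, $L_1(\mu)$ is separable and therefore enjoys the $GP$ property by the list of examples recalled in Section 2 (separable spaces are Gelfand-Phillips). For arbitrary finite $\mu$ the same conclusion follows from classical results in the literature, e.g.\ that every weakly sequentially complete Banach lattice with order-continuous norm is Gelfand-Phillips, which covers $L_1(\mu)$.

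Once the $GP$ property of $L_1(\mu)$ is recorded, Corollary \ref{Cinj} applied with $Y:=L_1(\mu)$ immediately yields that $X\otimes_\epsilon L_1(\mu)\cong L_1(\mu)\otimes_\epsilon X$ has the $p$-$GP$ property. The only conceivable obstacle is the external justification of $GP$ for $L_1(\mu)$ in full generality; the remaining argument is a purely formal invocation of the earlier corollary and is essentially parallel to the preceding corollary about $\ell_1[X]\cong K(c_0,X)$, where the Gelfand-Phillips property of $c_0^*\cong \ell_1$ played the analogous role.
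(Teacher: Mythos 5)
Your argument is correct and essentially the same as the paper's: both reduce the statement to the fact that $L_1(\mu)$ is a Gelfand--Phillips space for finite $\mu$ (the paper simply cites Drewnowski--Emmanuele for this) and then invoke the injective-tensor-product machinery. The paper applies Corollary \ref{Coro1} directly via the isometry $L_1(\mu)\otimes_\epsilon X\simeq K_{w^*}(X^*,L_1(\mu))$, whereas you pass through Corollary \ref{Cinj}, which is itself a consequence of Corollary \ref{Coro1}, so the routes coincide.
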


\begin{proof}
The space $L_1(\mu)$ where $\mu$ is a finite measure, has the $GP$ property \cite{DE}. It is known that  $L_1(\mu)\otimes_\epsilon X \simeq  K_{w^*}(X^*, L_1(\mu))$   \cite[Theorem 5,  p. 224]{DU}. By Corollary \ref{Coro1},  this space  has the $p$-$GP$ property. 
\end{proof}

\begin{corollary}
 Let $1\le p<\infty$. If $X$ has the $p$-$GP$ property, then so has $c_0(X)$, the Banach space of sequences in $X$ that converge to zero equipped  with the norm $\|(x_n)\|=\sup_n \|x_n\|$. 
\end{corollary}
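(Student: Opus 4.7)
The plan is to identify $c_0(X)$ with a space of compact, $w^*$-to-weak continuous operators and then invoke Corollary~\ref{Coro1}(ii).

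The key ingredient is the standard isometric isomorphism $c_0(X) \simeq K_{w^*}(\ell_1, X) = K_{w^*}((c_0)^*, X)$. The isomorphism sends a sequence $(x_n) \in c_0(X)$ to the operator $T\colon \ell_1 \to X$ defined by $T((a_n)) = \sum_n a_n x_n$, so that $T(e_n) = x_n$ and $\|T\| = \sup_n \|x_n\|$. I would verify that $T$ is indeed compact and $w^*$-$w$ continuous: since $\|x_n\| \to 0$ the set $\{x_n\}$ is norm relatively compact, and Mazur's theorem gives $T(B_{\ell_1}) = \overline{\mathrm{conv}}\{\pm x_n\}$ relatively compact, so $T$ is compact; and for each $x^* \in X^*$ one has $T^*(x^*) = (x^*(x_n))_n \in c_0$, which is the $w^*$-$w$ continuity. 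Conversely, any $T \in K_{w^*}((c_0)^*, X)$ produces a weakly null, relatively norm-compact sequence $(T(e_n))$, which is therefore norm null.

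With this identification in hand, the result follows immediately from Corollary~\ref{Coro1}(ii) applied with $c_0$ in the role of the space ``$X$'' and our $X$ in the role of ``$Y$'': the separable space $c_0$ possesses the $GP$ property (separable spaces were listed as $GP$ spaces in Section~2), and $X$ has the $p$-$GP$ property by hypothesis, so $K_{w^*}((c_0)^*, X) \simeq c_0(X)$ has the $p$-$GP$ property. Since the operator-theoretic identification above is classical and the corollary is tailor-made for this situation, there is no substantive obstacle in this argument; alternatively one could bypass $K_{w^*}$ altogether and appeal to Corollary~\ref{Cinj} via the identification $c_0(X) \simeq c_0 \otimes_\epsilon X$.
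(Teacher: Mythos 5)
Your argument is correct and is essentially the paper's: the paper identifies $c_0(X)$ with $c_0\otimes_\epsilon X$ and applies Corollary~\ref{Cinj}, which is itself just Corollary~\ref{Coro1} combined with the embedding of the injective tensor product into $K_{w^*}$, so your primary route through $K_{w^*}((c_0)^*,X)$ and Corollary~\ref{Coro1}(ii) is the same argument unwound one step, and your stated alternative via Corollary~\ref{Cinj} is verbatim the paper's proof. The verification that $c_0(X)\simeq K_{w^*}(\ell_1,X)$ is sound.
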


\begin{proof}
The space $c_0$ has the $GP$ property \cite{BD}. It is known that  $c_0 \otimes_\epsilon  X\simeq c_0(X)$ \cite[p. 47]{Ryan}. Then $c_0 \otimes_\epsilon  X$ has  the $p$-$GP$ property, by Corollary \ref{Cinj}. 
\end{proof}


\begin{theorem} \label{T2}
 Let $1\le p<\infty$.  Let  $X$ and $Y$ be Banach spaces.

(i) Let   $M$ be a closed subspace of $L(X,Y)$ such that  the evaluation operator $\psi_{y^*}:M\to X^*$ is limited $p$-convergent for each $y^*\in Y^*$.  If $M$ does not have the $p$-$GP$ property,  then there is a separable subspace $Y_0$ of $Y$ and an operator $A:Y_0\to c_0$ which is not completely continuous.

 (ii)  Let $M$ be a closed subspace of $L_{w^*}(X^*,Y)$ such that  the evaluation operator $\psi_{y^*}:M\to X$ is limited $p$-convergent for each $y^*\in Y^*$. If $M$ does not have the $p$-$GP$ property,  then there is a separable subspace $Y_0$ of $Y$ and an operator $A:Y_0\to c_0$ which is not completely continuous.

\end{theorem}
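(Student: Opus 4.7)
The plan is to mimic the proof of Theorem \ref{T1} for as long as possible, and in place of a Gelfand--Phillips hypothesis on $Y$ to extract the non-completely-continuous operator into $c_0$ from a separable subspace by a Hahn--Banach / Banach--Alaoglu argument. The hypothesis on $\psi_{y^*}$ is exactly what is needed to convert a limited weakly $p$-summable witness in $M$ into a weakly null non-norm-null sequence in $Y$; after that, the conclusion follows from a standard observation about separable spaces.

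For part (i), suppose $M$ lacks the $p$-$GP$ property and pick a normalized limited weakly $p$-summable sequence $(T_n)\subset M$ together with $x_n\in B_X$ satisfying $\|T_n(x_n)\|>1/2$. For every $y^*\in Y^*$, the hypothesis that $\psi_{y^*}:M\to X^*$ is limited $p$-convergent forces $T_n^*(y^*)=\psi_{y^*}(T_n)\to 0$ in norm, so
$$\langle T_n(x_n),y^*\rangle=\langle x_n,T_n^*(y^*)\rangle\to 0;$$
hence $(T_n(x_n))$ is a weakly null sequence in $Y$ with $\|T_n(x_n)\|>1/2$. Let $Y_0$ be its closed linear span, a separable subspace of $Y$. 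Part (ii) is identical, with $x_n^*\in B_{X^*}$ replacing $x_n$ and with $T_n^*(y^*)\in X$, as is appropriate for $T_n\in L_{w^*}(X^*,Y)$.

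To manufacture the non-completely-continuous operator $A:Y_0\to c_0$, the standard recipe goes through: pick $y_n^*\in B_{Y_0^*}$ with $y_n^*(T_n(x_n))>1/2$ by Hahn--Banach, use separability of $Y_0$ to note that $B_{Y_0^*}$ is $w^*$-metrizable and compact and hence extract a subsequence along which $y_n^*\xrightarrow{w^*}y_0^*$, and set $z_n^*:=y_n^*-y_0^*$. Then $(z_n^*)$ is bounded and $w^*$-null, while $z_n^*(T_n(x_n))=y_n^*(T_n(x_n))-y_0^*(T_n(x_n))>1/4$ eventually, because $y_0^*(T_n(x_n))\to 0$ by weak nullness. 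Define $A:Y_0\to c_0$ by $A(y):=(z_n^*(y))_n$; this operator is bounded and really lands in $c_0$ by $w^*$-nullness of $(z_n^*)$, yet $\|A(T_n(x_n))\|_\infty\geq 1/4$ along a weakly null sequence, so $A$ is not completely continuous.

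I do not foresee a real obstacle. The only conceptual point is recognizing that we need not assume anything about $Y$ beyond the existence of the witness: any separable Banach space carrying a weakly null, non-norm-null sequence automatically admits a non-completely-continuous operator into $c_0$ (this is precisely the failure of the $DP^*$-property in the sense of \cite{CGL}). The rest is the same bookkeeping as in Theorem \ref{T1}.
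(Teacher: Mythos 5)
Your proof is correct. The first half — using the limited $p$-convergence of $\psi_{y^*}$ to turn the witnessing sequence $(T_n)$ into a weakly null sequence $(T_n(x_n))$ in $Y$ with norms bounded below — is exactly the paper's argument. Where you diverge is in manufacturing the non-completely-continuous operator $A:Y_0\to c_0$. The paper applies the Bessaga--Pe{\l}czy\'nski selection principle to replace $(T_n(x_n))$ by a seminormalized weakly null \emph{basic} sequence $(y_n)$, takes $Y_0=[y_n]$, and defines $A(y)=(y_k^*(y))$ via the coefficient functionals, so that $\|A(y_n)\|\ge 1$. You instead take $Y_0$ to be the closed span of the original sequence, choose norming functionals $y_n^*\in B_{Y_0^*}$ by Hahn--Banach, use $w^*$-metrizability and compactness of $B_{Y_0^*}$ (separability of $Y_0$) to pass to a $w^*$-convergent subsequence, and recenter to get a $w^*$-null sequence $(z_n^*)$ with $z_n^*(T_n(x_n))>1/4$ eventually; the map $y\mapsto(z_n^*(y))$ then lands in $c_0$ precisely because $(z_n^*)$ is $w^*$-null. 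Both routes are standard and of comparable length; yours avoids the basic-sequence machinery at the cost of a subsequence extraction and recentering, and it makes transparent the underlying fact you state at the end — any separable space containing a weakly null, non-norm-null sequence admits a non-completely-continuous operator into $c_0$. The only bookkeeping point to keep straight is that after passing to the subsequence $(y_{n_k}^*)$ you must also pass to the corresponding subsequence of $(T_n(x_n))$, which remains weakly null, so nothing is lost.
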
 

\begin{proof}
(i) Suppose  $M$ is a closed subspace of $L(X,Y)$ which does not have the $p$-$GP$ property. 
Let $(T_n)$ be a limited weakly $p$-summable sequence in  $M$ such that $\|T_n \| = 1$ for each $n$.  Let $(x_n)$ be a sequence in $B_{X}$  so that $\|T_n (x_n)\| > 1/2$ for each $n$.  

Let $y^*\in Y^*$.  Since $\psi_{y^*}:M \to X^*$ is limited $p$-convergent, $(T_n^*(y^*))=(\psi_{y^*}(T_n))$ is  norm null. Then $ \langle y^*, T_n(x_n)\rangle    \le \|T_n^*(y^*)\|\to 0$. Therefore $(y_n):=(T_n(x_n))$ is weakly null in $Y$.

 By the Bessaga-Pelczynski selection principle \cite{JDSS}, we may (and do) assume that $(y_n)$ is a seminormalized weakly null basic sequence in $Y$.  Let $Y_0=[y_n]$ be the closed linear span of $(y_n)$ and let $(y_n^*)$ be the sequence of coefficient functionals associated with $(y_n)$. Define $A:Y_0\to c_0$ by $A(y)= (y_k^*(y))$, $y\in Y_0$. Note that $\|A(y_n)\| \ge 1$ for each $n$. Then $A$ is a bounded linear operator defined on a separable space, and $A$ is not completely continuous.

(ii)  Suppose  $M$ a closed subspace of $L_{w^*}(X^*,Y)$ which does not have the $p$-$GP$ property. Let $(T_n)$ be a limited weakly $p$-summable sequence in  $M$ such that $\|T_n \| = 1$ for each $n$.  Let $(x_n^*)$ be a sequence in $B_{X^*}$  so that $\|T_n (x_n^*)\| > 1/2$ for each $n$.  


 Let $y^*\in Y^*$. Since $\psi_{y^*}:M\to X$ is limited $p$-convergent, $(T_n^*(y^*))=(\psi_{y^*}(T_n))$ is  norm null.
Therefore $(y_n):=(T_n(x_n^*))$ is weakly null in $Y$.  Continue as in  (i).
\end{proof} 



\begin{corollary} \label{Corcc}
Let $1\le p<\infty$.

(i) Suppose $X^*$  has the $p$-$GP$ property  and  $M$ is a closed subspace of $L(X,Y)$. If $M$  does not have the $p$-$GP$ property, then  there is a separable subspace $Y_0$ of $Y$ and an operator $A:Y_0\to c_0$ which is not completely continuous. 

(ii) Suppose $X$  has the $p$-$GP$ property and  $M$ is a closed subspace of $L_{w^*}(X^*,Y)$. If $M$  does not have the $p$-$GP$ property, then  there is a separable subspace $Y_0$ of $Y$ and an operator $A:Y_0\to c_0$ which is not completely continuous.  

\end{corollary}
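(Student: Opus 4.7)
The plan is to deduce both parts directly from Theorem \ref{T2}. The only thing that needs checking is that under the stated hypotheses, the evaluation operators $\psi_{y^*}$ are limited $p$-convergent; once that is in place, Theorem \ref{T2}(i) and \ref{T2}(ii) respectively produce the required separable subspace $Y_0$ of $Y$ and the non-completely-continuous operator $A:Y_0\to c_0$.

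For the verification, I would use the observation already made in the paragraph following Theorem \ref{T1}: if $Z$ has the $p$-$GP$ property, then every operator $T:E\to Z$ is limited $p$-convergent, because continuity sends a limited weakly $p$-summable sequence in $E$ to a limited weakly $p$-summable sequence in $Z$, which must then be norm null. Applying this principle in part (i) with $Z=X^{*}$ and $T=\psi_{y^{*}}:M\to X^{*}$ shows that each $\psi_{y^{*}}$ is limited $p$-convergent, so Theorem \ref{T2}(i) applies. In part (ii), using $Z=X$ and $T=\psi_{y^{*}}:M\to X$ instead gives that each $\psi_{y^{*}}$ is limited $p$-convergent, and Theorem \ref{T2}(ii) applies.

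There is essentially no obstacle here; the corollary is purely a specialization of Theorem \ref{T2}, obtained by swapping the hypothesis ``each $\psi_{y^{*}}$ is limited $p$-convergent'' for the stronger but easier-to-check hypothesis that the target space of $\psi_{y^{*}}$ (namely $X^{*}$ in (i) and $X$ in (ii)) enjoys the $p$-$GP$ property. The only thing to keep track of is which evaluation operator lands in which space, which is dictated by whether $M\subseteq L(X,Y)$ or $M\subseteq L_{w^{*}}(X^{*},Y)$; after that the proof is a two-line invocation of the already-proved Theorem \ref{T2}.
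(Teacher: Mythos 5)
Your proposal is correct and matches the paper's proof exactly: the paper likewise notes that the $p$-$GP$ property of the target space ($X^*$ in (i), $X$ in (ii)) forces each evaluation operator $\psi_{y^*}$ to be limited $p$-convergent, and then invokes Theorem \ref{T2}. No gaps.
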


\begin{proof}
 (i) Since $X^*$  has the $p$-$GP$ property, $\psi_{y^*}:M\to X^*$ is limited $p$-convergent for each $y^*\in Y^*$.  Apply Theorem \ref{T2}. 

(ii) Since $X$  has the $p$-$GP$ property, $\psi_{y^*}:M\to X$ is limited $p$-convergent for each $y^*\in Y^*$.  Apply Theorem \ref{T2}. 
\end{proof}


\begin{corollary}
Let $1\le p<\infty$.

(i) Suppose $X^*$  has the $p$-$GP$ property and $Y$ has the Schur property. If  $M$ is a closed subspace of $L(X,Y)$, then $M$ has the $p$-$GP$ property.

(ii)  Suppose $X$  has the $p$-$GP$ property and $Y$ has the Schur property. If  $M$ is a closed subspace of $L_{w^*}(X^*,Y)=K_{w^*}(X^*,Y)$, then $M$ has the $p$-$GP$ property. 

(iii) Suppose $Y$  has the $p$-$GP$ property and $X$ has the Schur property. If  $M$ is a closed subspace of $L_{w^*}(X^*,Y)=K_{w^*}(X^*,Y)$, then $M$ has the $p$-$GP$ property. 
\end{corollary}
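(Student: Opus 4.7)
The plan is to prove all three parts by contradiction, combining Corollary \ref{Corcc} with the elementary fact that any subspace of a Schur space is again Schur, so every operator out of such a subspace (in particular one landing in $c_0$) is completely continuous. For part (i), I would suppose that $M \subseteq L(X,Y)$ fails the $p$-$GP$ property. Since $X^*$ has the $p$-$GP$ property, Corollary \ref{Corcc}(i) yields a separable subspace $Y_0 \subseteq Y$ and an operator $A : Y_0 \to c_0$ that is not completely continuous. But $Y_0$ inherits the Schur property from $Y$, so every weakly null sequence in $Y_0$ is norm null, forcing $A$ to be completely continuous, a contradiction.

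For part (ii), the argument is identical after substituting Corollary \ref{Corcc}(ii) for Corollary \ref{Corcc}(i): the $p$-$GP$ property of $X$ supplies the limited $p$-convergence of $\psi_{y^*} : M \to X$, and the Schur property of $Y$ again rules out the offending operator into $c_0$. I would remark once and for all that the stated identity $L_{w^*}(X^*,Y) = K_{w^*}(X^*,Y)$ is automatic: any $T \in L_{w^*}(X^*,Y)$ sends the $w^*$-compact set $B_{X^*}$ to a weakly compact subset of $Y$, and in a Schur space weakly compact subsets are norm compact (by Eberlein--\v{S}mulian together with the Schur property), so such a $T$ is compact.

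For part (iii), I would reduce to (ii) via the isometry $K_{w^*}(X^*,Y) \simeq K_{w^*}(Y^*,X)$, $T \mapsto T^*$, which transports $p$-$GP$ since the property is a linear-metric invariant. Under this isometry $M$ corresponds to a closed subspace $M'$ of $K_{w^*}(Y^*,X)$; now $Y$ plays the role of ``$X$'' (it has the $p$-$GP$ property) and $X$ plays the role of ``$Y$'' (it has the Schur property), so (ii) applied with the roles of $X$ and $Y$ interchanged shows that $M'$, and hence $M$, has the $p$-$GP$ property. No real obstacle is expected; the only point requiring care is bookkeeping the swap of hypotheses and confirming that the $p$-$GP$ property is preserved by the $T \mapsto T^*$ isometry.
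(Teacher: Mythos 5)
Your proposal is correct and follows essentially the same route as the paper: contradiction via Corollary \ref{Corcc}, the observation that the Schur property passes to the subspace $Y_0$ and kills the non-completely-continuous operator into $c_0$, the identification $L_{w^*}(X^*,Y)=K_{w^*}(X^*,Y)$ from weak-to-norm compactness in Schur spaces, and the isometry $K_{w^*}(X^*,Y)\simeq K_{w^*}(Y^*,X)$ for part (iii).
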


\begin{proof}
 (i) Suppose that $M$ does not have the $p$-$GP$ property. By Corollary \ref{Corcc}, there is a non-completely continuous operator defined on a closed linear subspace $Y_0$ of $Y$.  This is a contradiction since $Y$  has the Schur property. 

(ii) 
Let $T\in L_{w^*}(X^*,Y)$. Since $T$ is weakly compact and  $Y$ has the Schur property, $T$  is compact.  
Continue as above. 

(iii) It follows from (ii) and the isometries 1) on page 5.
\end{proof}

It is known that $\ell_\infty$ does not have the $GP$ property. Further, $\ell_\infty$ does not have the $p$-$GP$ property  for any $1\le p< \infty$ \cite{FZ2}.


\begin{theorem} \label{Tnec}
Let  $1\le p<\infty$. Suppose that $L_{w^*}(X^*,Y)$ has  property $GP$ (resp. $p$-$GP$).  Then $X$ and $Y$ have  property $GP$ (resp. $p$-$GP$)  and either  $\ell_2 \not \hookrightarrow X$ or $\ell_2 \not \hookrightarrow Y$.
If moreover $Y$ is a dual space $Z^*$, the condition  $\ell_2 \not \hookrightarrow Y$ implies $\ell_1\not \hookrightarrow Z$.
\end{theorem}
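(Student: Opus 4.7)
The plan is to split the theorem into three assertions. For the first, I would show $X$ and $Y$ inherit property $GP$ (resp.\ $p$-$GP$) by isometrically embedding them as closed subspaces of $L_{w^*}(X^*,Y)$ and observing that these properties pass to closed subspaces. Fixing $y_0\in Y$ and $x_0\in X$ of norm one, the rank-one operators $T_x(x^*)=x^*(x)y_0$ and $S_y(x^*)=x^*(x_0)y$ belong to $L_{w^*}(X^*,Y)$ (they are $w^*$-norm continuous), and the maps $x\mapsto T_x$, $y\mapsto S_y$ are isometric. Subspace heredity is a direct verification: for $W\subseteq Z$ closed, weak $p$-summability transfers via Hahn-Banach extensions of functionals, and limitedness transfers because a $w^*$-null sequence in $Z^*$ restricts to a $w^*$-null sequence in $W^*$; hence a limited weakly $p$-summable sequence in $W$ is limited weakly $p$-summable in $Z$, and therefore norm null.

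For the $\ell_2$-dichotomy I would argue by contradiction. Suppose both $i:\ell_2\hookrightarrow X$ and $j:\ell_2\hookrightarrow Y$ are isomorphic embeddings. Then $i^*:X^*\to\ell_2^*=\ell_2$ is surjective by the open mapping theorem, and is $w^*$-$w$ continuous since $\ell_2$ is reflexive. Define $\Phi:L(\ell_2,\ell_2)\to L_{w^*}(X^*,Y)$ by $\Phi(T)=j\circ T\circ i^*$; the composite is $w^*$-$w$ continuous because $j\circ T$ is weak-to-weak continuous. An open mapping argument (pulling back unit vectors in $\ell_2$ to bounded elements of $X^*$), combined with $j$ being bounded below, yields that $\Phi$ is an isomorphism onto its image. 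Diagonal operators give an isometric embedding $\ell_\infty\hookrightarrow L(\ell_2,\ell_2)$, so $\ell_\infty\hookrightarrow L_{w^*}(X^*,Y)$. Combining the fact cited just before the theorem that $\ell_\infty$ fails both $GP$ and $p$-$GP$ with the subspace heredity from the first paragraph contradicts the hypothesis on $L_{w^*}(X^*,Y)$. The construction of $\Phi$, factoring through $L(\ell_2,\ell_2)$ where $\ell_\infty$ sits as the diagonal operators, is the main technical content.

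For the last assertion, with $Y=Z^*$, I would invoke the classical theorem of Hagler that if $\ell_1\hookrightarrow Z$ then $L_1[0,1]\hookrightarrow Z^*$. Since Khintchine's inequality shows that the Rademacher functions span an isomorphic copy of $\ell_2$ inside $L_1[0,1]$, it follows that $\ell_1\hookrightarrow Z$ forces $\ell_2\hookrightarrow Z^*=Y$. Taking contrapositives yields the desired implication $\ell_2\not\hookrightarrow Y\Rightarrow\ell_1\not\hookrightarrow Z$.
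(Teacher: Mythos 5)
Your proposal is correct. The first assertion (isometric embeddings $x\mapsto x^*(x)y_0$, $y\mapsto x^*(x_0)y$ plus heredity of $GP$ and $p$-$GP$ under closed subspaces) and the last assertion (Pe{\l}czy\'nski--Hagler: $\ell_1\hookrightarrow Z$ forces $L_1\hookrightarrow Z^*$, then Khintchine/Rademacher to get $\ell_2$) coincide with what the paper does, the paper merely stating the subspace heredity without the verification you supply. The middle step is where you genuinely diverge. The paper quotes two results from Ghenciu--Lewis: $\ell_2\hookrightarrow X$ and $\ell_2\hookrightarrow Y$ give $c_0\hookrightarrow K_{w^*}(X^*,Y)$, and then, since neither $X$ nor $Y$ is Schur, the copy of $c_0$ upgrades to a copy of $\ell_\infty$ in $L_{w^*}(X^*,Y)$. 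You instead build $\ell_\infty$ directly: the map $\Phi(T)=j\circ T\circ i^*$ lands in $L_{w^*}(X^*,Y)$ because $i^*$ is $w^*$--$w$ continuous into the reflexive space $\ell_2$, it is bounded below since $i^*(B_{X^*})\supseteq\delta B_{\ell_2}$ by the open mapping theorem and $j$ is bounded below, and restricting $\Phi$ to the diagonal operators gives an isomorphic copy of $\ell_\infty$. Both routes then finish by noting $\ell_\infty$ fails $GP$ and $p$-$GP$ and these properties pass to closed subspaces. Your version is self-contained and elementary (only duality and the open mapping theorem), at the cost of redoing a construction; the paper's version is shorter given the cited machinery and records the intermediate fact $c_0\hookrightarrow K_{w^*}(X^*,Y)$, which is of independent interest. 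All the estimates you sketch ($\|\Phi(T)\|\ge\delta c_j\|T\|$, $\|D_a\|=\|a\|_\infty$) check out, so there is no gap.
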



\begin{proof} 
We only prove the result for the $p$-$GP$ property. The other proof is similar. 
Suppose that  $L_{w^*}(X^*,Y)$ has  property $p$-$GP$. Then $X$ and $Y$ have property $p$-$GP$, since property $p$-$GP$ is inherited by closed subspaces.  
 Suppose $\ell_2 \hookrightarrow X$ and   $\ell_2 \hookrightarrow Y$.  Then 
 $c_0 \hookrightarrow K_{w^*}(X^*,Y)$ by \cite[Theorem 20]{GLBE}. 
 Since $c_0 \hookrightarrow L_{w^*}(X^*,Y)$ and $X$ and $Y$ do not have the Schur property, 
$\ell_\infty \hookrightarrow L_{w^*}(X^*,Y)$ by \cite[Corollary 2]{GLBE}. 
This contradiction proves the first assertion.

Now suppose $Y=Z^*$ and  $\ell_1 \hookrightarrow Z$. Then $L_1 \hookrightarrow Z^*$ \cite[p. 212]{JDSS}.  Also, the Rademacher functions span $\ell_2$ inside of $L_1$,  hence $\ell_2 \hookrightarrow Z^*$. 
\end{proof}

\begin{corollary}
Let  $1\le p<\infty$. Suppose that $W(X,Y)$ has  property $GP$ (resp. $p$-$GP$).  Then $X^*$ and $Y$ have property  $GP$ (resp. $p$-$GP$)  and either  $\ell_1\not \hookrightarrow X$ or $\ell_2 \not \hookrightarrow Y$. If moreover $Y$ is a dual space $Z^*$, the condition  $\ell_2 \not \hookrightarrow Y$ implies $\ell_1\not \hookrightarrow Z$.
\end{corollary}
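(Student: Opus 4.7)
The plan is to reduce this corollary directly to Theorem \ref{Tnec} via the isometry $W(X,Y)\simeq L_{w^*}(X^{**},Y)$ given by $T\mapsto T^{**}$ (isometry (2) on page 5). The assumption that $W(X,Y)$ has the $p$-$GP$ (resp.\ $GP$) property becomes the assumption that $L_{w^*}((X^*)^*,Y)$ has that same property. Applying Theorem \ref{Tnec} with $X^*$ in place of $X$ then yields at once that $X^*$ and $Y$ possess the $p$-$GP$ (resp.\ $GP$) property and that either $\ell_2\not\hookrightarrow X^*$ or $\ell_2\not\hookrightarrow Y$.

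To finish the first assertion I would upgrade ``$\ell_2\not\hookrightarrow X^*$'' to ``$\ell_1\not\hookrightarrow X$'' by taking the contrapositive: if $\ell_1\hookrightarrow X$, then by the Pe{\l}czy\'nski-type result \cite[p.~212]{JDSS} one has $L_1\hookrightarrow X^*$, and since the Rademacher system inside $L_1$ spans an isomorphic copy of $\ell_2$, this forces $\ell_2\hookrightarrow X^*$. This is exactly the argument appearing at the end of the proof of Theorem \ref{Tnec}, merely applied now to $X$ instead of to $Z$. Hence the dichotomy coming from Theorem \ref{Tnec} sharpens to ``$\ell_1\not\hookrightarrow X$ or $\ell_2\not\hookrightarrow Y$''.

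For the final clause the situation $Y=Z^*$ is handled word for word as in the last paragraph of the proof of Theorem \ref{Tnec}: $\ell_1\hookrightarrow Z$ gives $L_1\hookrightarrow Z^*=Y$ and hence $\ell_2\hookrightarrow Y$, so $\ell_2\not\hookrightarrow Y$ precludes $\ell_1\hookrightarrow Z$. I do not expect any genuine obstacle; the only bookkeeping check is that $X^*$ and $Y$ do sit as closed subspaces of $L_{w^*}(X^{**},Y)$ (via rank-one maps $x^*\mapsto x^*\otimes y_0$ with a fixed $y_0\in Y$ and $y\mapsto x_0^{***}\otimes y$ with a fixed $x_0^{***}\in X^{***}$), so that the ``$p$-$GP$ passes to closed subspaces'' step invoked inside the proof of Theorem \ref{Tnec} is legitimate in the present setting.
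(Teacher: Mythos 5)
Your proof is correct and is exactly the paper's argument (the paper's proof is the one-liner ``Apply Theorem \ref{Tnec} and the isometries 2)''), with the reduction via $W(X,Y)\simeq L_{w^*}(X^{**},Y)$, the upgrade $\ell_2\not\hookrightarrow X^*\Rightarrow\ell_1\not\hookrightarrow X$ through $\ell_1\hookrightarrow X\Rightarrow L_1\hookrightarrow X^*\Rightarrow\ell_2\hookrightarrow X^*$, and the same treatment of the dual-space clause all spelled out correctly. One trivial nitpick: in your parenthetical bookkeeping remark the copy of $Y$ inside $L_{w^*}(X^{**},Y)$ should be obtained from rank-one maps $x_0^*\otimes y$ with $x_0^*\in X^*$ (so that they are $w^*$-$w$ continuous, i.e.\ are biadjoints of rank-one weakly compact operators), not from an arbitrary $x_0^{***}\in X^{***}$.
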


\begin{proof} 
Apply Theorem \ref{Tnec} and the isometries 2) on page 5.
\end{proof}

\section{Weakly-$p$-$L$-sets and weakly-$p$-Dunford-Pettis sets}

The following result gives a characterization of $p$-convergent operators.
The case  $1<p<\infty$ of the following result  \cite[p. 45]{CS} appeared   with no proof. We include a proof for the convenience of the reader. 

\begin{proposition} \label{pconv} 
Let $1\le p< \infty$. An operator $T:X\to Y$ is p-convergent if and only if for any operator $S:\ell_{p^*} \to X$ if $1<p<\infty$ (resp. $S:c_0\to X$ if  $p=1$), the operator $TS$ is compact. 
\end{proposition}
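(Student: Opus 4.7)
The plan is to prove both implications using the isometry $L(\ell_{p^*},X)\simeq\ell_p^w(X)$ (respectively $L(c_0,X)\simeq\ell_1^w(X)$ when $p=1$) via $S\mapsto(S(e_n))$ recalled earlier in the paper. For the direction $(\Leftarrow)$, given a weakly $p$-summable sequence $(x_n)$ in $X$, the isometry produces $S:\ell_{p^*}\to X$ (resp.\ $S:c_0\to X$) with $S(e_n)=x_n$; the hypothesis gives that $TS$ is compact, and since $(e_n)$ is weakly null in $\ell_{p^*}$ (resp.\ $c_0$), compactness of $TS$ forces $TS(e_n)=T(x_n)\to 0$ in norm, so $T$ is $p$-convergent.

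For the direction $(\Rightarrow)$, write $Z=\ell_{p^*}$ (resp.\ $c_0$), fix $S:Z\to X$, and set $U=TS$. Since $(e_n)$ is a $1$-unconditional Schauder basis of $Z$, the coordinate projections $P_N$ are contractions and $U\circ P_N$ has finite rank, so $U$ is compact if and only if $\|U\circ(I-P_N)\|\to 0$. Assuming this fails for contradiction, one finds $\epsilon>0$, an increasing sequence $(N_k)$, and unit vectors $z_k\in Z$ with $\|U(I-P_{N_k})z_k\|>2\epsilon$. Truncating the basis expansion of $(I-P_{N_k})z_k$ to a finite interval $I_k=(N_k,M_k]$ and passing to a subsequence with $N_{k+1}>M_k$ produces a bounded block basic sequence $v_k=\sum_{n\in I_k}a_n^{(k)}e_n$ in $Z$ with $\|v_k\|\le 1$ and $\|Uv_k\|>\epsilon$.

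The key calculation is that such a bounded block basic sequence is weakly $p$-summable in $Z$. For $Z=\ell_{p^*}$ and $y\in\ell_p$, H\"older's inequality gives $|\langle y,v_k\rangle|^p\le\|v_k\|_{p^*}^p\sum_{n\in I_k}|y_n|^p$; summing over $k$ and using disjointness of the $I_k$'s yields $\sum_k|\langle y,v_k\rangle|^p\le(\sup_k\|v_k\|)^p\|y\|_p^p$, and the case $Z=c_0$ is analogous with $y\in\ell_1$. Since $S$ is bounded linear, $(Sv_k)$ is weakly $p$-summable in $X$, so the $p$-convergence of $T$ forces $\|Uv_k\|=\|T(Sv_k)\|\to 0$, contradicting $\|Uv_k\|>\epsilon$. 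The main obstacle is this block-basis extraction via truncation and passage to a subsequence; once $(v_k)$ is in place, the H\"older estimate and the hypothesis on $T$ combine routinely to finish.
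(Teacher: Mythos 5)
Your proof is correct. The direction ``$TS$ compact for all $S$ implies $T$ is $p$-convergent'' is exactly the paper's argument (the isometry $L(\ell_{p^*},X)\simeq\ell_p^w(X)$, resp.\ $L(c_0,X)\simeq\ell_1^w(X)$, plus the fact that a compact operator sends the weakly null basis $(e_n)$ to a norm null sequence). The forward direction, however, is genuinely different. The paper disposes of it by citation: for $1<p<\infty$ it invokes $\ell_{p^*}\in W_p$ from Castillo--S\'anchez, so $S$ is weakly-$p$-compact and the $p$-convergence of $T$ turns weakly-$p$-convergent subsequences into norm convergent ones; for $p=1$ it observes that $TS:c_0\to Y$ is unconditionally converging and quotes the classical fact that unconditionally converging operators on $c_0$ are compact. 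You instead give a self-contained gliding-hump argument: if $TS$ is not a norm limit of the finite-rank operators $TSP_N$, extract a disjointly supported, uniformly bounded block sequence $(v_k)$ with $\|TSv_k\|>\epsilon$, check by H\"older (resp.\ the $\ell_1$--$c_0$ duality) that disjoint blocks in $\ell_{p^*}$ (resp.\ $c_0$) are weakly $p$-summable, push them through $S$, and contradict the $p$-convergence of $T$. In effect you re-prove the relevant special case of $\ell_{p^*}\in W_p$ rather than cite it, which costs a page of bookkeeping (the truncation of tails and the passage to a subsequence with $N_{k+1}>M_k$ must be done as you indicate) but buys a proof that is elementary, uses no external results beyond the basis isometry, and treats $p=1$ and $1<p<\infty$ by one uniform mechanism instead of two separate citations. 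Note also that you only ever use the easy half of your ``$U$ compact iff $\|U(I-P_N)\|\to 0$'' claim (a norm limit of finite-rank operators is compact), so the harder half, which would need the shrinking property of the basis, never has to be justified.
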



 \begin{proof} Suppose $T:X\to Y$ is $p$-convergent. Let $1<p<\infty$ and let $S:\ell_{p^*} \to X$ be an operator. Then $S$ is weakly-$p$-compact, since $\ell_{p^*}\in W_p$ \cite{CS}. 
 Hence $TS:\ell_{p^*}\to Y$ is compact.
Let $p=1$ and let  $S:c_0\to X $ be an operator. Then $TS:c_0\to Y$ is unconditionally converging, and $\sum TS(e_n)$ is  unconditionally convergent. Hence $TS$ is compact (\cite[Theorem 1.9, p. 9]{DJT}, \cite[p. 113]{JDSS}). 

Conversely, let $(x_n)$ be weakly $p$-summable in $X$.  Then $\ell_{p}^w (X)\simeq L(\ell_{p^*}, X)$ if $1<p < \infty $ (resp.  $\ell_{p}^w (X) \simeq L(c_0, X)$ if $p=1$) (\cite{GroSP}, \cite[Proposition 2.2, p. 36]{DJT}). 
Let  $S:\ell_{p^*} \to X$ if $1<p<\infty$ (resp. $S:c_0\to X$ if  $p=1$) be an operator such that $S(e_n)=x_n$. Since $TS$ is compact, $\|T(x_n)\|=\|TS(e_n)\|\to 0$, and thus $T$ is $p$-convergent. 
\end{proof}

\begin{theorem} \label{TwpDP}
Let $1\le p< \infty$. Let $T:Y\to X$ be an operator. The following are equivalent: 

1. (i)  $T(B_Y)$ is a weakly-$p$-Dunford-Pettis set.

(ii) $T^*:X^*\to Y^*$ is $p$-convergent.  

(iii) If $1<p<\infty$ and  $S:\ell_{p^*} \to X^*$ (resp. $p=1$ and $S:c_0\to X^*$) is an operator, then  $T^*S$ is  compact. \\

2. (i)  $T^*(B_{X^*})$ is a weakly-$p$-$L$-set.

(ii) $T$ is $p$-convergent.  

(iii) If $1<p< \infty$ and  $S:\ell_{p^*} \to Y$ (resp. $p=1$ and $S:c_0\to Y$) is an operator, then  $TS$ is  compact. 
\end{theorem}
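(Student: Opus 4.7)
The plan is to handle Parts 1 and 2 by the same two-move argument: first establish (i) $\Leftrightarrow$ (ii) by a direct duality computation that unwinds the definitions, then obtain (ii) $\Leftrightarrow$ (iii) for free by applying Proposition \ref{pconv}. So the real content is the duality, and it is short.

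For Part 1, I would unfold what it means for $T(B_Y)$ to be a weakly-$p$-Dunford-Pettis subset of $X$: given an arbitrary weakly $p$-summable sequence $(x_n^*)$ in $X^*$, one needs
$$\sup_{y\in B_Y}|x_n^*(T(y))|\to 0.$$
Rewriting $x_n^*(T(y)) = (T^*x_n^*)(y)$, the supremum over $y\in B_Y$ is exactly $\|T^*(x_n^*)\|$. Thus (i) says precisely that $T^*$ carries every weakly $p$-summable sequence to a norm-null sequence, which is (ii). The equivalence (ii) $\Leftrightarrow$ (iii) is then immediate from Proposition \ref{pconv} applied to $T^*$.

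Part 2 is completely parallel. The defining condition for $T^*(B_{X^*})$ to be a weakly-$p$-$L$-set reads: for every weakly $p$-summable $(y_n)$ in $Y$,
$$\sup_{x^*\in B_{X^*}}|(T^*x^*)(y_n)|\to 0.$$
Since $(T^*x^*)(y_n) = x^*(T(y_n))$, this supremum equals $\|T(y_n)\|$, so the condition collapses to $\|T(y_n)\|\to 0$ for every weakly $p$-summable $(y_n)$ in $Y$, i.e., $T$ is $p$-convergent. This gives (i) $\Leftrightarrow$ (ii), and Proposition \ref{pconv} applied to $T$ supplies (ii) $\Leftrightarrow$ (iii).

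I do not foresee a genuine obstacle here. The only mild bookkeeping concern is the case split between $p=1$ (where $c_0$ plays the role of $\ell_{p^*}$) and $1<p<\infty$, but both the definition of weakly $p$-summable sequences and the statement of Proposition \ref{pconv} already absorb this split by substituting $c_0$ for $\ell_{p^*}$ when $p=1$, so no extra argument is needed.
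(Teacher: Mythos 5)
Your proposal is correct and matches the paper's own proof essentially verbatim: the paper also derives (i) $\Leftrightarrow$ (ii) from the identity $\|T^*(x_n^*)\| = \sup\{|\langle T(y), x_n^*\rangle| : y \in B_Y\}$ and gets (ii) $\Leftrightarrow$ (iii) by citing Proposition \ref{pconv}. No issues.
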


\begin{proof} We only prove case 1. The other proof is similar. 

$(i) \Leftrightarrow (ii)$ 
This follows directly from the equality
$$ \|T^*(x_n^*)\|= \sup\{ |\langle T(y), x_n^*\rangle |: y\in B_Y\},$$ 
for all weakly $p$-summable sequences $(x_n^*)$ in $X^*$.



$(ii)  \Leftrightarrow (iii)$ by Proposition \ref{pconv}.
\end{proof}

In the next theorem we give elementary operator theoretic characterizations of weak precompactness, relative weak compactness, and relative norm compactness for weakly-$p$-Dunford-Pettis sets.

\begin{theorem}  \label{TRDP^*_p}
Let $1\le p< \infty$. Let $X$ be a Banach space. The following statements are equivalent: 

(i) For every Banach space $Y$, if   $T:Y\to X$ is an   operator such that $T^*:X^*\to Y^*$ is $p$-convergent, then $T$   is weakly precompact (weakly compact, resp. compact). 
 
(ii)  same as (i) with $Y=\ell_1$.

(iii) Every  weakly-$p$-Dunford-Pettis  subset of $X$ is weakly precompact (relatively weakly compact, resp. relatively compact).
\end{theorem}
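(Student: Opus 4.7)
The plan is to prove (i)$\Rightarrow$(ii)$\Rightarrow$(iii)$\Rightarrow$(i), running the three variants (weakly precompact, relatively weakly compact, relatively compact) in parallel. The implication (i)$\Rightarrow$(ii) is immediate by specialization.

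For (iii)$\Rightarrow$(i), the work is done by Theorem \ref{TwpDP}. Given an operator $T:Y\to X$ whose adjoint $T^*:X^*\to Y^*$ is $p$-convergent, part 1 of that theorem (the equivalence (i)$\Leftrightarrow$(ii)) tells us $T(B_Y)$ is a weakly-$p$-Dunford-Pettis subset of $X$. Applying (iii) in the appropriate flavor yields that $T(B_Y)$ is weakly precompact (resp. relatively weakly compact, resp. relatively compact), which is by definition the statement that $T$ is weakly precompact (resp. weakly compact, resp. compact).

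The main content is (ii)$\Rightarrow$(iii), argued by contraposition and a uniform construction. Suppose $A\subseteq X$ is weakly-$p$-Dunford-Pettis but fails to have the property in question. Extract a bounded sequence $(x_n)\subseteq A$ witnessing the failure: one with no weakly Cauchy subsequence (resp.\ no weakly convergent subsequence, resp.\ no norm Cauchy subsequence). Define $T:\ell_1\to X$ by $T((\alpha_n))=\sum_n\alpha_n x_n$; this is a bounded operator with $T(e_n)=x_n$ and $T(B_{\ell_1})=\overline{\mathrm{absconv}}\{x_n\}$. Since $\{x_n\}\subseteq T(B_{\ell_1})$, the witnessing sequence $(x_n)$ also lives in $T(B_{\ell_1})$, so $T$ fails the corresponding operator property. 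It remains to verify that $T^*$ is $p$-convergent, for which, by Theorem \ref{TwpDP}, it suffices that $T(B_{\ell_1})$ be weakly-$p$-Dunford-Pettis. For any weakly $p$-summable sequence $(x_n^*)$ in $X^*$ and any $y=\sum_k \lambda_k x_k\in\mathrm{absconv}\{x_k\}$ with $\sum_k|\lambda_k|\le 1$,
\[
|x_n^*(y)|\le\sum_k|\lambda_k|\,|x_n^*(x_k)|\le\sup_k|x_n^*(x_k)|,
\]
so $\sup_{y\in\mathrm{absconv}\{x_k\}}|x_n^*(y)|=\sup_k|x_n^*(x_k)|$, and this passes to the norm closure by continuity. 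Since $\{x_k\}\subseteq A$ is weakly-$p$-Dunford-Pettis, $\sup_k|x_n^*(x_k)|\to 0$, so $T(B_{\ell_1})$ is weakly-$p$-Dunford-Pettis, $T^*$ is $p$-convergent, and we contradict (ii).

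The main obstacle is purely bookkeeping: confirming that in each of the three flavors the same $\ell_1$-domain construction works, i.e.\ that "$T$ not weakly precompact / weakly compact / compact" really is detected by a single sequence in $T(B_{\ell_1})$. This is clean because each property is characterized sequentially on $T(B_{\ell_1})$ (weak precompactness by Rosenthal, relative weak compactness by Eberlein--\v Smulian, relative compactness by total boundedness), so placing the witnessing sequence $(x_n)$ inside $T(B_{\ell_1})$ immediately kills the property for $T$. No Rosenthal-type dichotomy is needed in the construction itself—the operator $T$ is available for any bounded sequence $(x_n)$.
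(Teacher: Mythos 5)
Your proposal is correct and follows essentially the same route as the paper: the implication (iii)$\Rightarrow$(i) via Theorem \ref{TwpDP}, and (ii)$\Rightarrow$(iii) via the operator $T:\ell_1\to X$, $T(b)=\sum_i b_i x_i$, whose adjoint $T^*(x^*)=(x^*(x_i))$ is shown to be $p$-convergent from the weakly-$p$-Dunford-Pettis hypothesis on $\{x_i\}$. The only difference is presentational --- you argue (ii)$\Rightarrow$(iii) by contraposition where the paper argues it directly --- which changes nothing of substance.
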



\begin{proof} We will show that $(i)  \Rightarrow (ii)\Rightarrow (iii) \Rightarrow (i)$ in the relatively weakly compact case. The arguments for the remaining implications of the theorem follow the same pattern.

$(i)  \Rightarrow (ii)$ is obvious.

 $(ii)  \Rightarrow (iii)$ Let $K$ be a weakly-$p$-Dunford-Pettis subset of $X$ and let $(x_n)$ be a sequence in $K$.  Define $T: \ell_1 \to X$ by $T(b)=\sum b_i \, x_i$. Note that $T^*:X^*\to \ell_\infty$, 
$T^*(x^*)=(x^*(x_i))$. Suppose $(x_n^*)$ is a weakly $p$-summable sequence in $X^*$. 
Since $K$ is a weakly-$p$-Dunford-Pettis set, 
$$\|T^*(x_n^*)\|=\sup_i|x_n^*(x_i)|\to 0. $$
Therefore $T^*$ is $p$-convergent and   thus  $T$ is weakly compact. Let $(e_n^*)$ be the unit basis of $\ell_1$. Then $(T(e_n^*))=(x_n)$ has a weakly convergent subsequence.



$(iii)  \Rightarrow (i)$ Let $T:Y\to X$ be an operator such that $T^*:X^*\to Y^*$ is $p$-convergent. Then $T(B_{Y})$ is a weakly-$p$-Dunford-Pettis set, thus relatively weakly compact. Hence $T$ is weakly compact. 
\end{proof}






Odell, Rosenthal, and Stegall \cite[p. 377]{HR}, showed that an operator $T:Y \to X$ is  weakly precompact if $L T: Y \to L_1$ is compact whenever $L : X \to L_1$ is a completely continuous map. 
Maps  with range in $\ell_p$ ($1< p< \infty$) can also be employed to identify such operators.

\begin{theorem} \label{pconvadjwpc}
Let $2< p< \infty$. 

(i) If $T:Y\to X$ is an operator such that $J T : Y \to \ell_p$ is compact for all  operators $J: X \to \ell_p$, then $T$ is weakly precompact.

(ii) If $T:Y\to X$ is an operator such that $T^*:X^*\to Y^*$ is $p$-convergent, then $T$ is weakly precompact.
\end{theorem}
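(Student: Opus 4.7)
The plan is to show that (i) and (ii) are equivalent and then prove (ii) by contradiction. Every $S\in L(\ell_{p^*},X^*)$ arises as $J^*$ for the operator $J=(S^*)|_X\in L(X,\ell_p)$ by reflexivity of $\ell_{p^*}$ (one checks $J^*(\zeta)(x)=\zeta(S^*x)=(S\zeta)(x)$, so $J^*=S$). Combined with Theorem~\ref{TwpDP} this shows the condition ``$JT$ is compact for every $J\in L(X,\ell_p)$'' coincides with ``$T^*$ is $p$-convergent'', so it suffices to prove (ii).

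Assume $T^*$ is $p$-convergent and, toward contradiction, that $T$ is not weakly precompact. By Rosenthal's $\ell_1$-theorem some subsequence of $(Ty_n)$ with $(y_n)\subset B_Y$ is equivalent to the $\ell_1$ unit basis; rename the subsequence and set $Z=\overline{\mathrm{span}}(Ty_n)\simeq\ell_1$. The contradiction will come from a weakly $p$-summable sequence $(x_n^*)\subset X^*$ with $|x_n^*(Ty_n)|\ge c>0$ for all $n$: for such a sequence
\[
\|T^*x_n^*\|_{Y^*}\ge|(T^*x_n^*)(y_n)|=|x_n^*(Ty_n)|\ge c,
\]
contradicting $T^*$ being $p$-convergent.

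The main obstacle is producing such $(x_n^*)$. The naive Hahn--Banach lift to $X^*$ of the biorthogonal functionals $e_n\in Z^*=\ell_\infty$ gives bounded $(x_n^*)$ with $x_n^*(Ty_k)=\delta_{nk}$, but weak $p$-summability fails because the values of the extensions on $X^{**}\setminus Z^{**}$ are uncontrolled. (Inside $Z^*$ itself the sequence $(e_n)$ is already weakly $1$-summable: any $\mu\in\mathrm{ba}(\mathbb N)=\ell_\infty^*$ decomposes into an atomic part giving an $\ell_1$-sequence of values $\mu(\{n\})$ and a purely finitely-additive part vanishing on singletons.) The obstruction is thus purely about lifting through the quotient $X^*\twoheadrightarrow Z^*$.

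To overcome this obstacle I exploit $p>2$ via a Hagler--Stegall-type construction. Since $X\supset\ell_1$ through $(Ty_n)$, the dual $X^*$ contains an isomorphic copy of $L_1[0,1]$, and the tree construction can be arranged compatibly with the $\ell_1$-basis $(Ty_n)$ so that the Rademacher branches $(r_n)\subset X^*$ satisfy $|r_n(Ty_n)|\ge c>0$. The Rademacher functions are weakly $2$-summable in $L_1$---by Khintchine's and Bessel's inequalities, for $f\in L_\infty$,
\[
\sum_n\Bigl|\int_0^1 r_n(t)f(t)\,dt\Bigr|^2\le\|f\|_{L_2}^2,
\]
---hence weakly $p$-summable in $X^*$ for every $p\ge 2$. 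Setting $x_n^*=r_n$ then furnishes the required contradiction. The principal technical difficulty I expect is the adaptation of Hagler's tree construction to the prescribed basis $(Ty_n)$ with controlled action of the Rademacher branches; once this is in place the remainder is bookkeeping.
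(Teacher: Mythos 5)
Your reduction of (i) to (ii) via Theorem \ref{TwpDP} and the identification $S=J^*$ for $J=(S^*)|_X$ is correct, and you have located the crux exactly where it lies: one must produce a weakly $p$-summable sequence $(x_n^*)$ in $X^*$ (not merely in $[Ty_n]^*$) that does not tend to zero uniformly on the $\ell_1$-basis $(Ty_n)$. But the step you propose to close this gap --- ``the tree construction can be arranged compatibly with the $\ell_1$-basis $(Ty_n)$ so that the Rademacher branches satisfy $|r_n(Ty_n)|\ge c$'' --- is precisely the entire content of the theorem, and it is asserted rather than proved. I do not believe the Hagler-type tree construction delivers it as stated: the tree functionals $x_t^*$ only control the values $x_t^*(Ty_i)$ for indices $i$ lying in infinite sets $M_t$ that thin out rapidly as $|t|$ grows, so a fixed index $n$ will in general have escaped every node at level $n$, and the value $r_n(Ty_n)$ is then a (normalized) sum of $2^n$ uncontrolled terms. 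Arranging the construction so that each $Ty_n$ is ``seen'' coherently by the whole $n$-th Rademacher level is not bookkeeping; if you could do it, you would in effect have re-proved the extension theorem below by hand. As written, the proposal proves nothing beyond the (correct but insufficient) observation that the biorthogonal functionals are weakly $1$-summable inside $[Ty_n]^*\simeq\ell_\infty$.

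The standard tool that closes exactly this lifting gap --- and the one the paper uses --- is the extension property of $2$-summing operators. With $S=[Ty_n]\simeq\ell_1$, the formal identity $j:S\to\ell_p$, $j(Ty_n)=e_n$, factors through the canonical ($1$-summing, hence $2$-summing) injection $\ell_1\to\ell_2$ since $p\ge 2$; a $2$-summing operator defined on a subspace extends to the whole space (Pietsch factorization plus complementation of closed subspaces of Hilbert space, \cite[Proposition 6.24]{Ryan}). The extension $J:X\to\ell_p$ then gives $JT$ compact by hypothesis while $JT(y_n)=e_n$ is not relatively compact in $\ell_p$ --- equivalently, $x_n^*:=J^*(e_n)$ is the weakly $p$-summable sequence with $x_n^*(Ty_n)=1$ that you were trying to build. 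I recommend you replace the Hagler/Rademacher step with this argument; your framing of the problem then carries through verbatim.
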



\begin{proof} 
(i) Let  $T$ be an operator as in the statement of the theorem. Suppose (by way of contradiction) that $(y_n)$ is a sequence in $B_Y$ and $(T(y_n))$ has no weakly Cauchy subsequence.  By Rosenthal's $\ell_1$ - theorem, we can assume that $(T(y_n)) \sim (e_n^*)$, where $(e_n^*)$ is the unit vector basis of $\ell_1$.  Let $S = [ T(y_n )]$, an isomorph of $\ell_1$. Let $j:S\to \ell_p$ be the natural inclusion.
The canonical injection $j_1:\ell_1\to \ell_2$ is (absolutely) $1$-summing \cite[Example  6.17, p. 145]{Ryan}, and thus it is $2$-summing. Since $j$ naturally factors through $\ell_2$, $j$ is $2$-summing. 
Now use the fact that all closed linear subspaces of an $L_2$ - space are complemented and the constructions on p. 60 - 61 of \cite{JDSS} to obtain an operator $J: X \to \ell_p$ which extends $j$ (see also \cite[Proposition 6.24, p. 150]{Ryan}).  
 Then $JT : Y \to \ell_p$ is compact.
  But $JT (y_n) = j(e_n^*) = e_n$, and $(e_n)$ is  not relatively compact in $\ell_p$.
	
	(ii) Let $T:Y\to X$ is an operator such that $T^*:X^*\to Y^*$ is $p$-convergent and let $J: X \to \ell_p$
	be an operator. By Theorem \ref{TwpDP}, $T^*J^*:\ell_{p^*} \to Y^*$ is compact.
 Then $JT : Y \to \ell_p$ is compact. Apply (i). 
\end{proof}

The weak precompactness of a DP set is well known;
e.g., see \cite{KA}, \cite[p. 377]{HR}.
We obtain an analogous result for weakly-$p$-Dunford-Pettis sets ($2< p< \infty$).

\begin{corollary} \label{Cwpc}
Let $2< p< \infty$. 
Every weakly-$p$-Dunford-Pettis subset of $X$ is weakly precompact.
\end{corollary}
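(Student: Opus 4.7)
The plan is to deduce the corollary directly by combining the two tools just established: Theorem \ref{TRDP^*_p} and Theorem \ref{pconvadjwpc}(ii). Theorem \ref{TRDP^*_p} provides the equivalence, for any Banach space $X$, between the statement "every weakly-$p$-Dunford-Pettis subset of $X$ is weakly precompact" and the operator-theoretic statement "for every Banach space $Y$, every operator $T:Y\to X$ with $p$-convergent adjoint is weakly precompact". Theorem \ref{pconvadjwpc}(ii) establishes the latter condition outright under the range assumption $2<p<\infty$, without any hypothesis on $X$.

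So the proof is essentially one line: fix an arbitrary Banach space $X$, let $Y$ be any Banach space, and let $T:Y\to X$ be an operator whose adjoint $T^*:X^*\to Y^*$ is $p$-convergent. By Theorem \ref{pconvadjwpc}(ii), $T$ is weakly precompact. Thus condition (i) of Theorem \ref{TRDP^*_p} (in its weak precompactness form) is satisfied, and the equivalence (i)$\Leftrightarrow$(iii) of that theorem yields that every weakly-$p$-Dunford-Pettis subset of $X$ is weakly precompact.

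There is no substantive obstacle here, since the hard work is already in Theorem \ref{pconvadjwpc}(i) (the factorization through $\ell_2$ of the natural inclusion $\ell_1\hookrightarrow\ell_p$, which requires $p\ge 2$) and in the implications $(ii)\Rightarrow(iii)$ of Theorem \ref{TRDP^*_p} (realizing a sequence in a weakly-$p$-Dunford-Pettis set as the image under an operator $\ell_1\to X$ whose adjoint is automatically $p$-convergent). The only thing worth checking is that the range restriction $2<p<\infty$ comes from Theorem \ref{pconvadjwpc} and not from Theorem \ref{TRDP^*_p}, which is indeed the case. Hence the statement is just the combination of these two results.
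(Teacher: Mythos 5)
Your proposal is correct and is exactly the paper's own argument: Theorem \ref{pconvadjwpc}(ii) shows every operator into $X$ with $p$-convergent adjoint is weakly precompact, and the equivalence (i)$\Leftrightarrow$(iii) of Theorem \ref{TRDP^*_p} (weak precompactness case) then gives the conclusion. Your remark that the restriction $2<p<\infty$ enters only through Theorem \ref{pconvadjwpc} is also accurate.
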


\begin{proof}
 Let $T:Y\to X$ be  an   operator such that $T^*:X^*\to Y^*$ is $p$-convergent.  By Theorem \ref{pconvadjwpc}, $T$ is weakly precompact. Apply Theorem \ref{TRDP^*_p}. 
\end{proof}

A Banach space $X$ is called \emph{weakly sequentially complete} if every weakly Cauchy sequence in $X$ is weakly convergent. 
	

\begin{corollary} \label{Cwsc}
Let $2< p< \infty$. 

(i) If $X$ is weakly sequentially complete, then $X$ has property $RDP^*_p$. 

(ii) If $X$ has the Schur property, then every weakly-p-Dunford-Pettis subset of $X$ is relatively compact.

\end{corollary}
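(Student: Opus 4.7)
The plan is to bootstrap Corollary~\ref{Cwpc}, which already delivers that every weakly-$p$-Dunford-Pettis subset of $X$ is weakly precompact for $2<p<\infty$. What remains is to upgrade ``weakly precompact'' to ``relatively weakly compact'' under the assumption of (i), and to ``relatively compact'' under the assumption of (ii). Both upgrades are standard once the right hypothesis is in place.

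For (i), let $A\subseteq X$ be a weakly-$p$-Dunford-Pettis set and let $(x_n)$ be a sequence in $A$. By Corollary~\ref{Cwpc}, $(x_n)$ admits a weakly Cauchy subsequence $(x_{n_k})$. Because $X$ is weakly sequentially complete, this subsequence is weakly convergent to some point of $X$. Hence every sequence in $A$ has a weakly convergent subsequence, so $A$ is relatively weakly compact by the Eberlein--\v Smulian theorem; this is the definition of property $RDP^*_p$.

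For (ii), the key preliminary observation is that a Schur space is automatically weakly sequentially complete: if $(x_n)$ is weakly Cauchy, then for any two subsequences $(x_{n_k})$ and $(x_{m_k})$ the difference $(x_{n_k}-x_{m_k})$ is weakly null, hence norm null by the Schur property, so $(x_n)$ is norm Cauchy and therefore norm convergent. Now given a weakly-$p$-Dunford-Pettis set $A\subseteq X$ and a sequence $(x_n)\subseteq A$, Corollary~\ref{Cwpc} gives a weakly Cauchy subsequence, which by the preceding remark converges in norm. Thus $A$ is relatively norm compact.

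There is no real obstacle here; the only point requiring a line of argument is the passage from weakly Cauchy to norm Cauchy in a Schur space, which is the standard subsequence-of-differences trick sketched above.
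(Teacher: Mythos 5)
Your proof is correct and follows exactly the route the paper intends: the corollary is stated immediately after Corollary~\ref{Cwpc} with no written proof, and the evident derivation is precisely your upgrade of the weakly Cauchy subsequence furnished by Corollary~\ref{Cwpc} via weak sequential completeness in (i) and the Schur property in (ii). The subsequence-of-differences argument you give for ``weakly Cauchy implies norm Cauchy'' in a Schur space is the standard one and is sound.
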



The following theorem gives a characterization of weakly-$p$-$L$-sets. 
For any $p$, $1\le p<\infty$, a bounded subset $K$ of $\ell_p$ is relatively compact if and only if $\lim_n \sum_{i=n}^\infty|k_i|^p=0$, uniformly for $k \in K$ \cite[p. 6]{JDSS}.


\begin{theorem} \label{pLset}
Let $1<p<\infty$. Suppose that $A$ is a  bounded subset of $X^*$. The following are equivalent:

(i) $A$ is a weakly-$p$-$L$-subset of $X^*$. 

 
(ii) $T^*(A)$ is relatively compact  whenever $Y$ is a Banach space and  $T:Y \to X$ is a  weakly $p$-precompact operator.


(iii)  $T^*(A)$ is relatively compact  whenever $Y\in WPC_p$ and $T:Y \to X$ is an   operator.

(iv) $T^*(A)$ is relatively compact  whenever  $T:\ell_{p^*} \to X$ is an   operator.

(v) If $(x_n)$ is a weakly $p$-summable sequence in $X$ and $(x_n^*)$ is a sequence in $A$, then $\lim x_n^*(x_n)=0 $. 
\end{theorem}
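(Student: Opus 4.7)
The plan is to establish the cycle $(i) \Rightarrow (ii) \Rightarrow (iii) \Rightarrow (iv) \Rightarrow (i)$ together with the side equivalence $(i) \Leftrightarrow (v)$. Of these, the steps $(i) \Leftrightarrow (v)$, $(ii) \Rightarrow (iii) \Rightarrow (iv)$, and $(iv) \Rightarrow (i)$ are essentially routine; the main work is in $(i) \Rightarrow (ii)$.

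For $(i) \Leftrightarrow (v)$, one direction is the inequality $|x_n^*(x_n)| \le \sup_{x^*\in A}|x^*(x_n)|$, and the reverse uses a standard diagonal extraction (from a failure of the sup condition, choose $x_k^*\in A$ witnessing it on a subsequence, contradicting $(v)$). For $(ii) \Rightarrow (iii)$: if $Y \in WPC_p$ then $B_Y$ is weakly-$p$-precompact, and a continuous linear map sends weakly-$p$-Cauchy sequences to weakly-$p$-Cauchy sequences, so every $T:Y\to X$ is weakly-$p$-precompact. For $(iii) \Rightarrow (iv)$: by Proposition~\ref{pconv}, $\ell_{p^*} \in W_p \subseteq WPC_p$. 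For $(iv) \Rightarrow (i)$: given a weakly-$p$-summable sequence $(x_n)$ in $X$, the isometry $\ell_p^w(X) \simeq L(\ell_{p^*},X)$ produces $T:\ell_{p^*}\to X$ with $T(e_n)=x_n$, and then $T^*(x^*)=(x^*(x_n))_n \in \ell_p$; the uniform-tail characterization of compactness in $\ell_p$ recalled just before the theorem yields $\sup_{x^*\in A}|x^*(x_n)| \to 0$.

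For the key step $(i) \Rightarrow (ii)$, the plan is to factor through an auxiliary $\ell_\infty$-space. Define $J:X\to\ell_\infty(A)$ by $J(x)=(x^*(x))_{x^*\in A}$; boundedness of $A$ makes $J$ a bounded operator, and condition $(i)$ is precisely the statement that $\|J(x_n)\|_\infty \to 0$ for every weakly-$p$-summable $(x_n)$, i.e.\ that $J$ is $p$-convergent. Now let $T:Y\to X$ be weakly-$p$-precompact. I claim $JT$ is compact: given any $(y_n)\subset B_Y$, pass to a subsequence so that $(T(y_{n_k}))$ is weakly-$p$-Cauchy; for any two increasing index sequences $(\alpha_j),(\beta_j)$ the sequence $T(y_{n_{\alpha_j}})-T(y_{n_{\beta_j}})$ is weakly-$p$-summable, and $p$-convergence of $J$ forces $JT(y_{n_{\alpha_j}})-JT(y_{n_{\beta_j}})\to 0$ in $\ell_\infty(A)$, which shows $(JT(y_{n_k}))$ is Cauchy. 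Hence $JT$ is compact, and so is $(JT)^* = T^* J^*$. For each $x^*\in A$ the evaluation functional $\delta_{x^*}\in B_{\ell_\infty(A)^*}$ given by $\delta_{x^*}(\xi)=\xi(x^*)$ satisfies $J^*(\delta_{x^*})=x^*$, so $T^*(x^*)=(JT)^*(\delta_{x^*})$ lies in the relatively compact set $(JT)^*(B_{\ell_\infty(A)^*})$, and hence $T^*(A)$ is relatively compact.

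The main obstacle is $(i)\Rightarrow(ii)$, because $(ii)$ quantifies over weakly-$p$-precompact operators from arbitrary Banach spaces, strictly broader than what $(iii)$ or $(iv)$ require. The factorization through $\ell_\infty(A)$ is the crucial device: it repackages the weakly-$p$-$L$ condition as $p$-convergence of the auxiliary operator $J$, so that composing with a weakly-$p$-precompact $T$ yields a compact operator, and the identity $J^*(\delta_{x^*})=x^*$ pushes that compactness back down to $T^*(A)$.
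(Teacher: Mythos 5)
Your proof is correct and follows essentially the same route as the paper: the same cycle $(i)\Rightarrow(ii)\Rightarrow(iii)\Rightarrow(iv)\Rightarrow(i)$ with $(i)\Leftrightarrow(v)$ on the side, and for the key step $(i)\Rightarrow(ii)$ the same device of factoring through an $\ell_\infty$-type space via an operator that condition (i) makes $p$-convergent, then composing with the weakly-$p$-precompact $T$. The only differences are cosmetic: you index by all of $A$ (pulling back via evaluation functionals) where the paper works with an arbitrary sequence from $A$ and the unit vectors $e_n^*$ of $\ell_1$, and you spell out the fact, used tacitly in the paper, that a $p$-convergent operator composed with a weakly-$p$-precompact one is compact.
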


\begin{proof} 
$(i)\Rightarrow (ii)$ Suppose that $A$ is a weakly-$p$-$L$-subset of $X^*$ and let $(x_n^*)$ be a sequence in $A$. Let $T:Y \to X$ be a weakly-$p$-precompact operator. Define $S:X\to \ell_\infty$ by 
$S(x)=(x_n^*(x))$.  
Let $(x_n)$ be a  weakly $p$-summable sequence in $X$. Since  $A$ is a weakly-$p$-$L$-subset of $X^*$,  
$\lim_n \|S(x_n)\|=\lim_n \sup_i |x_i^*(x_n)|=0 $, and thus $S$ is $p$-convergent. 
Then $ST:Y\to \ell_\infty$ is compact,  since $T$ is weakly-$p$-precompact. Let $(e_n^*)$ be the unit basis of $\ell_1$. Since $T^*S^*$ is compact, $(T^*(x_n^*))=(T^*S^*(e_n^*))$ is relatively compact.  

 $(ii)\Rightarrow (iii)$ If $Y\in WPC_p$, then any operator $T:Y \to X$ is weakly-$p$-precompact. 

$(iii)\Rightarrow (iv)$ Suppose $T:\ell_{p^*} \to X$ is an operator. Since $1< p^*<\infty$, 
$\ell_{p^*} \in W_p$ \cite[Proposition 1.4]{CS}.
Then  $T^*(A)$ is relatively compact. 

$(iv)\Rightarrow (i)$ Let $(x_n)$ be a weakly $p$-summable sequence in $X$. Let  $T:\ell_{p^*} \to X$ such that 
$T(e_n)=x_n$ (\cite{GroSP},  \cite[Proposition 2.2]{DJT}). Since $T^*(A)$ is relatively compact in $\ell_p$,  
$ \sup_{x^*\in A} |\langle T^*(x^*), e_n\rangle|= \sup_{x^*\in A} |\langle x^*, x_n\rangle|\to 0$. 

$(i)\Rightarrow (v)$ is obvious. 


	$(v)\Rightarrow (i)$ Let  $(x_n)$ be a weakly $p$-summable sequence in $X$. Since $A$ is bounded, for every $n$ we can choose $x_n^*$ in $A$ such that 
	$\sup_{x^*\in A}  |x^*(x_n)| \le 2 |x_n^*(x_n)|$. Then $\sup_{x^*\in A}  |x^*(x_n)| \le 2 |x_n^*(x_n)|\to 0$, and $A$ is an weakly-$p$-$L$-set. 
\end{proof}


The following result gives a characterization of weakly-$p$-DP sets. 

\begin{corollary} \label{pDPset}
Let $1<p<\infty$. Suppose that $A$ is a bounded subset of a Banach space  $X$. Then the following assertions are equivalent: 

(i) $A$ is a weakly-$p$-DP set.

(ii)  $T(A)$ is relatively compact whenever $Y$ is a Banach space and $T:X\to Y$ is an operator with  weakly-$p$-precompact adjoint. 


(iii)  $T(A)$ is relatively compact whenever $Y^*\in WPC_p$  and $T:X\to Y$ is an operator. 


(iv)  $T(A)$ is relatively compact whenever $T:X\to \ell_p$ is an operator. 

(v) If $(x_n^*)$ is a weakly $p$-summable sequence in $X^*$ and $(x_n)$ is a sequence in $A$, then $\lim x_n^*(x_n)=0 $. 
\end{corollary}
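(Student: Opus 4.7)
The plan is to mirror the proof of Theorem \ref{pLset}, dualizing the roles of $X$ and $X^*$, and to carry out the chain $(i) \Rightarrow (ii) \Rightarrow (iii) \Rightarrow (iv) \Rightarrow (v) \Rightarrow (i)$.

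For $(i) \Rightarrow (ii)$, fix any sequence $(x_n)$ in $A$ and define $V : \ell_1 \to X$ by $V(a) = \sum_i a_i x_i$, so that $V(e_i) = x_i$. The adjoint $V^* : X^* \to \ell_\infty$ is given by $V^*(x^*) = (x^*(x_i))_i$, and the weakly-$p$-Dunford-Pettis hypothesis on $A$ gives $\|V^*(x_n^*)\| = \sup_i |x_n^*(x_i)| \le \sup_{x \in A} |x_n^*(x)| \to 0$ for every weakly $p$-summable $(x_n^*) \subset X^*$, so $V^*$ is $p$-convergent. A short contradiction argument shows that any $p$-convergent operator sends weakly-$p$-Cauchy sequences to norm Cauchy ones; combined with the weak-$p$-precompactness of $T^* : Y^* \to X^*$, this makes $V^* T^* = (TV)^* : Y^* \to \ell_\infty$ compact. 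Compactness of $TV : \ell_1 \to Y$ then follows by Schauder's theorem, so $(T(x_n)) = (TV(e_n))$ has a norm convergent subsequence, proving $T(A)$ is relatively compact.

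The remaining implications are short and match the pattern of Theorem \ref{pLset}. For $(ii) \Rightarrow (iii)$, if $Y^* \in WPC_p$ then $T^*(B_{Y^*})$ is automatically weakly-$p$-precompact for any operator $T : X \to Y$. For $(iii) \Rightarrow (iv)$, specialize to $Y = \ell_p$; since $\ell_{p^*} \in W_p$ (cited from \cite{CS} in the proof of Proposition \ref{pconv}) it also lies in $WPC_p$. For $(iv) \Rightarrow (v)$, a weakly $p$-summable $(x_n^*) \subset X^*$ defines a bounded operator $T : X \to \ell_p$ via $T(x) = (x_n^*(x))_n$; by (iv), $T(A)$ is relatively compact in $\ell_p$, and the compactness criterion in $\ell_p$ stated just before Theorem \ref{pLset} gives $\sum_{i \ge n} |x_i^*(x)|^p \to 0$ uniformly in $x \in A$, so $|x_n^*(x_n)|^p \le \sum_{i \ge n} |x_i^*(x_n)|^p \to 0$ for any $(x_n) \subset A$. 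Finally, $(v) \Rightarrow (i)$ is the standard near-supremum selection: for each $n$ pick $y_n \in A$ with $\sup_{x \in A} |x_n^*(x)| \le 2|x_n^*(y_n)|$ and apply (v).

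The only step requiring genuine care is $(i) \Rightarrow (ii)$: the factorization $V : \ell_1 \to X$ makes the adjoint of $TV$ manifestly compact, and Schauder's theorem then drops compactness back onto $TV$ itself, so that compactness of $(T(x_n))$ is read off directly in $Y$ without having to pass through the second bidual. Once this framework, parallel to that of Theorem \ref{pLset}, is in place, the remaining implications are essentially formal.
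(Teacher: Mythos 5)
Your proof is correct. The only place it genuinely diverges from the paper is the implication $(i)\Rightarrow(ii)$: the paper disposes of this in two lines by observing that a weakly-$p$-DP subset of $X$ is, under the canonical embedding, a weakly-$p$-$L$-subset of $X^{**}=(X^*)^*$, and then invoking Theorem \ref{pLset} for the weakly-$p$-precompact operator $T^*:Y^*\to X^*$ to get $T^{**}(A)$ relatively compact, hence $T(A)$ relatively compact. You instead redo the Theorem \ref{pLset} machinery directly on the predual side: factor a sequence of $A$ through $\ell_1$ via $V(e_i)=x_i$, check that $V^*$ is $p$-convergent, compose with the weakly-$p$-precompact $T^*$ to make $(TV)^*$ compact, and use Schauder to pull compactness back to $TV$. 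Both arguments rest on the same engine (a $p$-convergent operator maps weakly-$p$-Cauchy sequences to norm Cauchy ones, so its composition with a weakly-$p$-precompact operator is compact); what your version buys is that it never leaves $Y$, avoiding the identification $T^{**}|_X=T$ and the passage from relative compactness in $Y^{**}$ back to $Y$, at the cost of repeating an argument the paper has already packaged as Theorem \ref{pLset}. Your routing of the last step as $(iv)\Rightarrow(v)\Rightarrow(i)$ rather than the paper's direct $(iv)\Rightarrow(i)$ (plus the trivial $(i)\Rightarrow(v)$ and $(v)\Rightarrow(i)$) is only a cosmetic reordering; the $\ell_p$-compactness criterion is used identically in both.
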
 


\begin{proof} 
$(i)\Rightarrow (ii)$  Let $T:X\to Y$ be an operator such that  $T^*:Y^*\to X^*$ is weakly-$p$-precompact. Since  $A$ is a weakly-$p$-DP subset of $X$, $A$ is an weakly-$p$-$L$-subset of $X^{**}$.
 By Theorem \ref{pLset}, $T^{**}(A)$ is relatively compact. Hence $T(A)$ is relatively compact.

$(ii)\Rightarrow (iii)$ If $Y^*\in WPC_p$, then any operator $T:X\to Y$  has a  weakly-$p$-precompact adjoint. 




$(iii)\Rightarrow (iv)$ Let $T:X\to \ell_p$ be an   operator. Since $1< p^*<\infty$, $\ell_{p^*} \in W_p$ \cite{CS}. 
 Thus $T(A)$ is relatively compact.

  $(iv)\Rightarrow (i)$ Let $(x_n^*)$ be a weakly $p$-summable sequence in $X^*$. Let  $T:\ell_{p^*} \to X^*$ such that $T(e_n)=x_n^*$ (\cite{GroSP},  \cite[Proposition 2.2]{DJT}). Let $T_1=T^*|X:X\to \ell_p$. Since $T_1(A)$ is relatively compact in $\ell_p$,  
$ \sup_{x\in A} |\langle T^*(x), e_n\rangle|= \sup_{x\in A} |\langle x, x_n^*\rangle|\to 0$. 

 $(v)\Rightarrow (i)$ Let  $(x_n^*)$ be a weakly $p$-summable sequence in $X^*$. Since $A$ is bounded, for every $n$ we can choose $x_n$ in $A$ such that 
	$\sup_{x\in A} |x_n^*(x)| \le 2 |x_n^*(x_n)|$. Then $\sup_{x^*\in A} |x^*(x_n)| \le 2 |x_n^*(x_n)|\to 0$, and $A$ is a weakly-$p$-DP set. 
\end{proof}

If $X$ is any infinite dimensional Schur space, then all bounded subsets of $X^*$ are weakly-$p$-$L$-subsets, and thus there are weakly-$p$-$L$-subsets of $X^*$ which fail to be weakly precompact.

\begin{theorem}\label{TRDP_p}
Let $1\le p< \infty$. Suppose that X is a Banach space. The following are equivalent:

(i) For every Banach space $Y$, if $T: X \to Y$ is a p-convergent operator, then $T^*:Y^*\to X^*$ is  weakly precompact (weakly compact, resp. compact). 

(ii) Same as (i) with $Y = \ell_\infty$.

(iii)  Every weakly-p-$L$-subset of $X^*$ is weakly precompact (relatively weakly compact, resp. relatively compact).
\end{theorem}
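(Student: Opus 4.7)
The plan is to follow the same triangle strategy used in Theorem \ref{TRDP^*_p}, but with the roles of $X$ and $X^*$ dualised: I would prove $(i) \Rightarrow (ii) \Rightarrow (iii) \Rightarrow (i)$, carry out the argument in detail for the relatively weakly compact case, and note that the weakly precompact and compact cases follow the same pattern (just replace ``weakly convergent subsequence'' by ``weakly Cauchy subsequence'' or ``norm convergent subsequence'' throughout).

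The implication $(i) \Rightarrow (ii)$ is trivial. For $(iii) \Rightarrow (i)$, I would check directly that if $T \colon X \to Y$ is $p$-convergent, then $T^*(B_{Y^*})$ is a weakly-$p$-$L$-subset of $X^*$: for every weakly $p$-summable sequence $(x_n)$ in $X$,
$$\sup_{y^* \in B_{Y^*}} |\langle T^*(y^*), x_n\rangle| \;=\; \sup_{y^* \in B_{Y^*}} |\langle y^*, T(x_n)\rangle| \;=\; \|T(x_n)\| \;\to\; 0.$$
Applying (iii) to this set gives that $T^*$ is weakly compact.

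The heart of the proof is $(ii) \Rightarrow (iii)$. Given a weakly-$p$-$L$-subset $A$ of $X^*$ and a sequence $(x_n^*)$ in $A$, I would define $T \colon X \to \ell_\infty$ by $T(x) = (x_n^*(x))_n$; this is well-defined and bounded because $A$ is bounded. The operator $T$ is $p$-convergent: for any weakly $p$-summable sequence $(x_m)$ in $X$,
$$\|T(x_m)\| \;=\; \sup_n |x_n^*(x_m)| \;\to\; 0$$
by the defining property of the weakly-$p$-$L$-set $A$. Hypothesis (ii) then yields that $T^* \colon \ell_\infty^* \to X^*$ is weakly compact. A one-line computation shows that for the $n$-th unit vector $e_n^* \in \ell_1 \subseteq \ell_\infty^*$ we have $\langle T^*(e_n^*), x\rangle = \langle e_n^*, T(x)\rangle = x_n^*(x)$, so $T^*(e_n^*) = x_n^*$. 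Since $(e_n^*)$ is bounded in $\ell_\infty^*$, the sequence $(x_n^*) = (T^*(e_n^*))$ has a weakly convergent subsequence, as required.

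I don't expect a genuine obstacle; the only delicate point is the norm-compact version of $(ii) \Rightarrow (iii)$, where one must extract a norm-convergent subsequence of $(T^*(e_n^*))$, but this is automatic from the compactness of $T^*$ applied to the bounded sequence $(e_n^*)$. The whole argument is the natural dual of the proof of Theorem \ref{TRDP^*_p}, with $\ell_1$ replaced by $\ell_\infty$ and operators $Y \to X$ replaced by operators $X \to Y$.
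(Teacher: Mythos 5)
Your proposal is correct and follows essentially the same route as the paper: the cycle $(i)\Rightarrow(ii)\Rightarrow(iii)\Rightarrow(i)$, with the key step being the operator $T\colon X\to\ell_\infty$, $T(x)=(x_n^*(x))$, built from a sequence in the weakly-$p$-$L$-set and the identification $T^*(e_n^*)=x_n^*$. The only cosmetic difference is that the paper writes out the weakly precompact case while you write out the relatively weakly compact case; both correctly note that the remaining cases follow the same pattern.
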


\begin{proof}
We will show that  $(i) \Rightarrow (ii) \Rightarrow (iii) \Rightarrow (i)$ in the  weakly  precompact case.  The arguments for all the remaining implications in the theorem follow the same pattern.

  $(i) \Rightarrow (ii)$ is obvious. $(ii) \Rightarrow (iii)$ 
Let $A$ be a  weakly-$p$-$L$-subset of $X^*$ and let $(x_n^*)$ be a sequence in $A$. Define $T:X\to \ell_\infty$ by 
$T(x)=(x_i^*(x)) $, $x\in X$. Suppose $(x_n)$ is weakly $p$-summable in $X$.  Since $A$ is a  weakly-$p$-$L$-subset,
$$\lim_n \|T(x_n)\|=\lim_n \sup_i |x_i^*(x_n)| =0. $$
Therefore $T$ is $p$-convergent, and thus $T^*:\ell_\infty^*\to X^*$ is weakly precompact. Let $(e_n^*)$ be the unit vector basis of $\ell_1$. 
Hence $(T^*(e_n^*))=(x_n^*)$ has a weakly Cauchy subsequence. 

$(iii) \Rightarrow (i)$  Suppose that every weakly-$p$-$L$-subset of $X^*$ is weakly precompact   and let $T: X\to Y$ be a  $p$-convergent  operator. 
Hence $T^*(B_{Y^*})$ is a weakly-$p$-$L$-subset of $X^*$, thus weakly precompact. Therefore $T^*$ is weakly precompact.
\end{proof}

\begin{corollary} \label{Crdpstar}
Let $1\le p< \infty$.


(i) If  $X$ has property $(V^*)$, then $X$ has property $RDP_p^*$. 

(ii) If $X$ has property $(V)$, then $X$ has property $RDP_p$. 
\end{corollary}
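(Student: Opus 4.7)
The plan is to reduce each part to the corresponding fact for $p=1$, at which point property $(V^*)$ is literally the definition of $RDP_1^*$ and property $(V)$ is literally the definition of $RDP_1$. The only ingredient needed beyond this identification is the scale inclusion $\ell_1^w(Z) \subseteq \ell_p^w(Z)$ valid for every Banach space $Z$ and every $1 \le p < \infty$; this was already recorded in Section 2 and, combined with the identifications of $V^*$-subsets and $V$-subsets with the $p=1$ instances of the weakly-$p$-Dunford-Pettis and weakly-$p$-$L$ classes, drives both arguments.

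For (i), I would take an arbitrary weakly-$p$-Dunford-Pettis subset $A$ of $X$ and show that $A$ is automatically a $V^*$-subset of $X$. Given any weakly unconditionally convergent series $\sum x_n^*$ in $X^*$, the sequence $(x_n^*)$ lies in $\ell_1^w(X^*)$, hence in $\ell_p^w(X^*)$, and so the weakly-$p$-DP hypothesis forces $\sup_{x \in A}|x_n^*(x)| \to 0$. Property $(V^*)$ then yields the relative weak compactness of $A$, which is exactly the statement that $X$ has $RDP_p^*$.

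For (ii), the argument is symmetric on $X^*$. For a weakly-$p$-$L$-subset $A$ of $X^*$ and an arbitrary wuc series $\sum x_n$ in $X$, the sequence $(x_n)$ belongs to $\ell_1^w(X) \subseteq \ell_p^w(X)$, so $\sup_{x^* \in A}|x^*(x_n)| \to 0$; hence $A$ is a $V$-subset of $X^*$, and property $(V)$ finishes the argument.

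There is no serious obstacle: both parts reduce to the observation that the weakly-$p$-DP and weakly-$p$-$L$ classes shrink as $p$ grows, so the classical Pe{\l}czy\'nski properties $(V)$ and $(V^*)$, being the $p=1$ instances of $RDP_p$ and $RDP_p^*$, automatically imply all the weaker $p \ge 1$ versions. Equivalently, the corollary may be phrased as the chains $(V^*) = RDP_1^* \Rightarrow RDP_p^*$ and $(V) = RDP_1 \Rightarrow RDP_p$, via the monotonicity statements on $RDP_p$ and $RDP_p^*$ already listed in Section 2.
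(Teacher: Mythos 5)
Your proof is correct. Part (i) is essentially the paper's own argument: a weakly-$p$-Dunford-Pettis set is a weakly-$1$-Dunford-Pettis set (because a wuc series is precisely a weakly $1$-summable sequence and $\ell_1^w(X^*)\subseteq \ell_p^w(X^*)$), hence a $V^*$-set, and $(V^*)$ finishes. For part (ii) you take a mildly different and arguably cleaner route: you argue directly at the level of sets, showing every weakly-$p$-$L$-subset of $X^*$ is a $V$-subset and invoking the definition of property $(V)$, which makes (ii) exactly symmetric to (i). The paper instead passes through the operator formulation: a $p$-convergent operator is unconditionally converging, Pe{\l}czy\'nski's theorem gives weak compactness from $(V)$, and Theorem \ref{TRDP_p} translates back to the set statement. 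Your version avoids the detour through \cite{AP} and Theorem \ref{TRDP_p} at the cost of nothing, and your closing observation that both parts are just the monotonicity statements $(V)=RDP_1\Rightarrow RDP_p$ and $(V^*)=RDP_1^*\Rightarrow RDP_p^*$ already recorded in Section 2 is an accurate summary of why the corollary is immediate.
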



\begin{proof} 
(i) Let $A$ be a weakly-$p$-DP subset of $X$. Then $A$ is a weakly-$1$-DP subset, thus  a $V^*$-subset of $X$. Since $X$ has property $(V^*)$, $A$ is relatively weakly compact. 

(ii) Suppose $T: X \to Y$ is a $p$-convergent operator.  Then $T$ is unconditionally converging. Since  $X$ has property $(V)$, $T$ is weakly compact \cite{AP}. Hence $T^*$ is weakly compact. 
Apply Theorem \ref{TRDP_p}.
\end{proof}

\begin{corollary}\label{quotient}
Let $1\le p< \infty$. If $X$ has property $RDP_p$, then every quotient space of $X$ has property $RDP_p$. 
\end{corollary}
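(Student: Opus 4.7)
The plan is to use the operator-theoretic characterization of property $RDP_p$ supplied by Theorem \ref{TRDP_p}, combined with Gantmacher's theorem on weak compactness of adjoints. That characterization says $X$ has property $RDP_p$ if and only if every $p$-convergent operator $T:X\to Y$ has weakly compact adjoint, which is equivalent (by Gantmacher) to $T$ itself being weakly compact. So it suffices to show that every $p$-convergent operator out of a quotient $X/N$ of $X$ is weakly compact, and then invoke Theorem \ref{TRDP_p} in the reverse direction.

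Let $N$ be a closed subspace of $X$, let $Q: X\to X/N$ denote the quotient map, and let $T: X/N\to Y$ be a $p$-convergent operator. First I would verify that $TQ: X\to Y$ is also $p$-convergent. This is the routine step: if $(x_n)$ is weakly $p$-summable in $X$, then for every $\phi\in (X/N)^*$ we have $\langle \phi, Q(x_n)\rangle = \langle Q^*(\phi), x_n\rangle$, which lies in $\ell_p$ because $Q^*(\phi)\in X^*$; hence $(Q(x_n))$ is weakly $p$-summable in $X/N$, and so $\|T(Q(x_n))\|\to 0$ since $T$ is $p$-convergent.

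Next, the hypothesis that $X$ has property $RDP_p$, applied to $TQ$ via Theorem \ref{TRDP_p}, gives that $TQ$ is weakly compact. It remains to pass this back to $T$. By the open mapping theorem there is a constant $c>0$ with $B_{X/N}\subseteq c\,Q(B_X)$, so
\[
T(B_{X/N}) \subseteq c\,T(Q(B_X)) = c\,(TQ)(B_X),
\]
and the right-hand side is relatively weakly compact. Thus $T$ is weakly compact, as required. A second appeal to Theorem \ref{TRDP_p} then yields $RDP_p$ for $X/N$.

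I do not expect a substantive obstacle here: the argument is essentially the standard factorization trick combined with the characterization already proved. The only point of care is making sure one uses the equivalence ``$p$-convergent operator $\Rightarrow$ weakly compact'' (which for the domain requires $RDP_p$ of $X$) rather than reasoning directly with weakly-$p$-$L$-subsets of $(X/N)^*$; the latter alternative would work equally well, using that $Q^*:(X/N)^*\to X^*$ is an isometric weak-to-weak embedding and carrying a weakly-$p$-$L$-subset of $(X/N)^*$ to a weakly-$p$-$L$-subset of $X^*$, but the operator route is shorter.
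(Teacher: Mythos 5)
Your proof is correct and follows essentially the same route as the paper: factor the $p$-convergent operator $T$ through the quotient map $Q$, note $TQ$ is $p$-convergent, apply Theorem \ref{TRDP_p} to $TQ$, and transfer weak compactness back to $T$ before invoking Theorem \ref{TRDP_p} again. The only (immaterial) difference is that you transfer on the primal side via $B_{X/N}\subseteq c\,Q(B_X)$ and Gantmacher, whereas the paper argues on the adjoint side, using that $Q^*$ is an isomorphism onto its range to pass from weak compactness of $(TQ)^*=Q^*T^*$ to that of $T^*$.
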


\begin{proof} 
Suppose that $X$ has property $RDP_p$,  $Z$ is a quotient of $X$ and $Q:X\to Z$ is a quotient map. Let $T: Z\to E$  be a $p$-convergent operator. Then $TQ:X\to E$ is $p$-convergent, and thus $(TQ)^*$ is weakly compact by   Theorem \ref{TRDP_p}. Since $Q^*$ is an isomorphism and  $Q^*T^*(B_{E^*})$ is relatively weakly compact, $T^*(B_{E^*})$ is relatively weakly compact. Apply  Theorem \ref{TRDP_p}.
\end{proof}

\begin{corollary} \label{CwDP}
Let $1< p< \infty$. Let $T:Y\to X$ be an operator. The following are equivalent: 

(i)  $T(B_Y)$ is a weakly-p-Dunford-Pettis set.

(ii) $T^*:X^*\to Y^*$ is p-convergent.  

(iii) If   $A:X \to \ell_{p}  $  is an operator, then  $AT$ is  compact. 
\end{corollary}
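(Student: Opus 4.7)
The corollary refines Theorem \ref{TwpDP} (case 1) by replacing the auxiliary condition ``$T^*S$ compact for all $S:\ell_{p^*}\to X^*$'' with the more natural ``$AT$ compact for all $A:X\to\ell_p$''. Since Theorem \ref{TwpDP} already supplies (i) $\Leftrightarrow$ (ii), my plan is to prove (ii) $\Leftrightarrow$ (iii) by exploiting the adjoint correspondence between these two families of operators, which is bijective because $\ell_p$ is reflexive (so $\ell_p^*=\ell_{p^*}$), together with Schauder's theorem that an operator and its adjoint are simultaneously compact.

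For (ii) $\Rightarrow$ (iii), I take an arbitrary $A:X\to\ell_p$, form its adjoint $A^*:\ell_{p^*}\to X^*$, and apply Proposition \ref{pconv} to conclude that $T^*A^*$ is compact. Since $T^*A^*=(AT)^*$, Schauder's theorem gives that $AT:Y\to\ell_p$ is compact.

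For (iii) $\Rightarrow$ (ii), I verify $p$-convergence of $T^*$ via Proposition \ref{pconv}: given any operator $S:\ell_{p^*}\to X^*$, set $x_n^*:=S(e_n)$, which is a weakly $p$-summable sequence in $X^*$ by the isometry $L(\ell_{p^*},X^*)\simeq \ell_p^w(X^*)$. In particular, evaluating at $x\in X\subset X^{**}$ shows $(x_n^*(x))_n\in\ell_p$, so the prescription $A(x):=(x_n^*(x))_n$ defines a bounded operator $A:X\to\ell_p$. A direct computation gives $A^*(e_n)=x_n^*=S(e_n)$ for every $n$, and since $(e_n)$ is a Schauder basis of $\ell_{p^*}$ and both $A^*$ and $S$ are continuous, $A^*=S$. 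By hypothesis (iii), $AT$ is compact, so $T^*S=T^*A^*=(AT)^*$ is compact, as required.

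The only genuine obstacle is confirming that $A\mapsto A^*$ in fact surjects $L(X,\ell_p)$ onto $L(\ell_{p^*},X^*)$, that is, that every $S:\ell_{p^*}\to X^*$ really does arise as the adjoint of some $A:X\to\ell_p$; this is the step which requires the reflexivity of $\ell_p$ and the explicit use of the isometry $L(\ell_{p^*},X^*)\simeq \ell_p^w(X^*)$. Once this identification is in hand, the rest is routine bookkeeping via Proposition \ref{pconv} and Schauder's theorem.
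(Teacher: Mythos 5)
Your proposal is correct and follows essentially the same route as the paper: (i)$\Leftrightarrow$(ii) is quoted from Theorem \ref{TwpDP}, and (ii)$\Leftrightarrow$(iii) is obtained by passing between $A:X\to\ell_p$ and $S:\ell_{p^*}\to X^*$ via adjoints together with Schauder's theorem. The paper realizes the operator you build coordinatewise as $A=S^*|_X$, which is the same map; your explicit verification that $A^*=S$ on the basis $(e_n)$ is just a spelled-out version of that identification.
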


\begin{proof}
$(i) \Leftrightarrow (ii)$ by Theorem \ref{TwpDP}. 

$(ii)  \Rightarrow (iii)$ Let $A:X \to \ell_{p}  $ be an operator. By Theorem \ref{TwpDP}, $T^*A^*:\ell_{p^*} \to Y^*$ is compact. Then $AT : Y \to \ell_p$ is compact.

$(iii)  \Rightarrow (ii)$ 
Let $S: \ell_{p^*}  \to X^*$  be an operator. Let $A=S^*|X$, $A:X\to \ell_p$. Then $A^*=S$. Since $AT$ is compact, $T^*A^*= T^*S$ is compact. Apply Theorem \ref{TwpDP}. 
\end{proof}

\begin{corollary} \label{ball}
Let $1 < p< \infty$. The following are equivalent: 

1. (i)  $B_X$ is a weakly-p-Dunford-Pettis set.


(ii)   $L(\ell_{p^*}, X^*)=K(\ell_{p^*}, X^*)$. 

(iii) $L(X, \ell_{p})=K(X, \ell_{p})$. 

(iv) $X^*\in C_p$.\\

2. (i)  $B_{X^*}$ is a weakly-p-L-set.


(ii) $L(\ell_{p^*}, X)=K(\ell_{p^*}, X)$. 

(iii) $X \in C_p$.
\end{corollary}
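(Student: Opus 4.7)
The plan is to deduce both parts of the corollary by specializing the earlier characterization theorems to the identity operator $T = \mathrm{id}_X$. The only genuine observation is the trivial identity $\sup_{x \in B_X} |x^*(x)| = \|x^*\|$ (and analogously $\sup_{x^* \in B_{X^*}} |x^*(x)| = \|x\|$), which translates the weakly-$p$-Dunford-Pettis (resp. weakly-$p$-$L$) condition on the unit ball into a $p$-convergence condition on the identity of the dual (resp. of the space).

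For Part 1, apply Theorem \ref{TwpDP}(1) with $Y = X$ and $T = \mathrm{id}_X$, observing that $\mathrm{id}_X(B_X) = B_X$ and $(\mathrm{id}_X)^* = \mathrm{id}_{X^*}$. This immediately gives: (i) $B_X$ is weakly-$p$-DP $\Longleftrightarrow$ $\mathrm{id}_{X^*}$ is $p$-convergent (which is (iv)) $\Longleftrightarrow$ for every $S : \ell_{p^*} \to X^*$, the composition $\mathrm{id}_{X^*} \circ S = S$ is compact, i.e. $L(\ell_{p^*}, X^*) = K(\ell_{p^*}, X^*)$ (which is (ii)). To fold in (iii), I will invoke Corollary \ref{CwDP} with the same $T = \mathrm{id}_X$: the characterization there says $B_X$ is weakly-$p$-DP iff every operator $A : X \to \ell_p$ satisfies $A \circ \mathrm{id}_X = A$ compact, i.e. $L(X, \ell_p) = K(X, \ell_p)$. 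This closes the cycle for Part 1.

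For Part 2, I apply Theorem \ref{TwpDP}(2) with $T = \mathrm{id}_X$, so that $T^*(B_{X^*}) = B_{X^*}$. The theorem then yields: (i) $B_{X^*}$ is a weakly-$p$-$L$-set $\Longleftrightarrow$ $\mathrm{id}_X$ is $p$-convergent (which is (iii), namely $X \in C_p$) $\Longleftrightarrow$ for every $S : \ell_{p^*} \to X$, $\mathrm{id}_X \circ S = S$ is compact, i.e. $L(\ell_{p^*}, X) = K(\ell_{p^*}, X)$ (which is (ii)). To make (i) $\Leftrightarrow$ (iii) completely transparent even without invoking the theorem, I can also note directly that the definition of weakly-$p$-$L$-set for $A = B_{X^*}$ reads $\|x_n\| = \sup_{x^* \in B_{X^*}} |x^*(x_n)| \to 0$ for every weakly-$p$-summable $(x_n) \subset X$, which is exactly $\mathrm{id}_X \in C_p(X,X)$.

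There is no serious obstacle: the corollary is a clean specialization, and the only care required is the bookkeeping to check that the hypotheses of the earlier results are met (the separation $1 < p < \infty$ is needed so that the $\ell_{p^*}$-versions of Theorem \ref{TwpDP} and Corollary \ref{CwDP} apply directly without the $c_0$ replacement used in the $p=1$ case). The proof will therefore be short, essentially a two-line invocation of Theorem \ref{TwpDP} plus (for Part 1) a one-line invocation of Corollary \ref{CwDP}.
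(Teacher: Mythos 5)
Your proposal is correct and is exactly the paper's argument: the paper's proof consists of applying Theorem \ref{TwpDP} and Corollary \ref{CwDP} to the identity map on $X$, which is precisely your specialization $T=\mathrm{id}_X$. The bookkeeping you spell out (identifying $\mathrm{id}_{X^*}\circ S=S$, etc.) is all that the paper leaves implicit.
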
 

\begin{proof}
We only prove case 1. The other proof is similar. 
  Apply Theorem \ref{TwpDP} and Corollary \ref{CwDP} to the identity map  $i$ on $X$. 
\end{proof}

\begin{corollary}
Let $1 < p< \infty$.

1. Suppose $L(\ell_{p^*}, X^*)=K(\ell_{p^*}, X^*)$. 
Then $X$ has the $RDP^*_p$ property if and only if $X$ is reflexive. 

2. Suppose $L(\ell_{p^*}, X)=K(\ell_{p^*}, X)$. Then $X$ has the $RDP_p$ property if and only if $X$ is reflexive. 
\end{corollary}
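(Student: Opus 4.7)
The plan is to deduce both parts as easy consequences of Corollary \ref{ball} combined with the definitions of $RDP_p^*$ and $RDP_p$. The hypotheses in each part are exactly the conditions from Corollary \ref{ball} which characterize $B_X$ (respectively $B_{X^*}$) as a weakly-$p$-Dunford-Pettis set (respectively weakly-$p$-$L$-set). Once that identification is made, reflexivity corresponds to $B_X$ (or $B_{X^*}$) being relatively weakly compact, which matches the conclusion of the relevant reciprocal Dunford-Pettis type property applied to a single distinguished set.

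For case 1, I would argue as follows. By Corollary \ref{ball}(1), the hypothesis $L(\ell_{p^*},X^*)=K(\ell_{p^*},X^*)$ is equivalent to $B_X$ being a weakly-$p$-Dunford-Pettis subset of $X$. If $X$ has property $RDP_p^*$, then $B_X$ is relatively weakly compact by definition, so $X$ is reflexive. Conversely, if $X$ is reflexive, every bounded subset of $X$ (in particular every weakly-$p$-Dunford-Pettis set) is relatively weakly compact, so $X$ has $RDP_p^*$.

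For case 2, I would do the analogous argument using Corollary \ref{ball}(2): the hypothesis $L(\ell_{p^*},X)=K(\ell_{p^*},X)$ is equivalent to $B_{X^*}$ being a weakly-$p$-$L$-subset of $X^*$. If $X$ has property $RDP_p$, then every weakly-$p$-$L$-subset of $X^*$, and in particular $B_{X^*}$, is relatively weakly compact, so $X^*$ is reflexive, hence so is $X$. Conversely, reflexivity of $X$ gives reflexivity of $X^*$, whence every bounded subset of $X^*$ is relatively weakly compact and in particular $RDP_p$ holds.

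There is no real obstacle here; the entire content of the corollary is packaged inside Corollary \ref{ball} and the definitions. The only point to state carefully is the equivalence ``$X^*$ reflexive $\iff$ $X$ reflexive'' used in the converse of case 2.
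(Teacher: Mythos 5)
Your proposal is correct and follows exactly the paper's route: invoke Corollary \ref{ball} to identify $B_X$ (resp.\ $B_{X^*}$) as a weakly-$p$-Dunford-Pettis set (resp.\ weakly-$p$-$L$-set) and then apply the definition of $RDP^*_p$ (resp.\ $RDP_p$). The paper only writes out the forward direction of case 1 and leaves the trivial converse and case 2 to the reader, which you have simply spelled out.
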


\begin{proof}
 We only prove case 1. The other proof is similar. Suppose $L(\ell_{p^*}, X^*)=K(\ell_{p^*}, X^*)$ and $X$ has the $RDP^*_p$ property. By Corollary \ref{ball}, $B_X$ is a weakly-$p$-Dunford-Pettis set, and thus relatively weakly compact. 
\end{proof}

\begin{corollary} \label{C27}
Let $1< p< \infty$.

(i) If $X\in WPC_p$, then every weakly-p-L-subset of $X^*$ is relatively compact.

(ii) If $X^*\in WPC_p$, then every weakly-p-Dunford Pettis subset of $X$ is relatively compact.

(iii) If $X$ is infinite dimensional and $X\in C_p$, then $X^*$ contains weakly-p-$L$-sets which are not relatively compact.  

(iv) If $X$ is infinite dimensional and $X^*\in C_p$, then $X$ contains weakly-p-Dunford Pettis sets which are not relatively compact. 
\end{corollary}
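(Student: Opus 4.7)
The plan is to derive all four statements directly from the characterizations already established: Theorem~\ref{pLset} for weakly-$p$-$L$-sets, Corollary~\ref{pDPset} for weakly-$p$-Dunford-Pettis sets, and Corollary~\ref{ball} for the unit ball case. In each part I will apply one of these characterizations to a canonical operator (the identity) and read off the desired conclusion.

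For part (i), suppose $X\in WPC_p$, so $id_X:X\to X$ is a weakly-$p$-precompact operator. If $A\subseteq X^*$ is a weakly-$p$-$L$-subset, then by the implication $(i)\Rightarrow(ii)$ of Theorem~\ref{pLset}, applied with $Y=X$ and $T=id_X$, the set $id_X^*(A)=A$ is relatively compact. Part (ii) is symmetric: if $X^*\in WPC_p$, then the adjoint of $id_X:X\to X$ is $id_{X^*}$, which is weakly-$p$-precompact, so by Corollary~\ref{pDPset} applied with $T=id_X$, every weakly-$p$-Dunford-Pettis subset of $X$ equals $id_X(A)$ and hence is relatively compact.

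For parts (iii) and (iv), the key input is Corollary~\ref{ball}. If $X\in C_p$, then Corollary~\ref{ball} part 2 gives that $B_{X^*}$ is itself a weakly-$p$-$L$-set; since $X$ is infinite dimensional, $X^*$ is infinite dimensional and $B_{X^*}$ fails to be norm relatively compact. Dually, if $X^*\in C_p$, then Corollary~\ref{ball} part 1 yields that $B_X$ is a weakly-$p$-Dunford-Pettis set, and infinite dimensionality of $X$ again prevents relative norm compactness.

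There is no real obstacle here; the entire corollary is a packaging of Theorem~\ref{pLset}, Corollary~\ref{pDPset}, and Corollary~\ref{ball} applied to the identity map. The only thing to be careful about is matching the hypothesis correctly in each direction: in (i) and (ii) one needs the identity (or its adjoint) to be weakly-$p$-precompact so that the characterization of $L$- and DP-sets via compact images is usable, while in (iii) and (iv) the hypothesis $X\in C_p$ (respectively $X^*\in C_p$) is exactly what makes the unit ball a weakly-$p$-$L$-set (respectively weakly-$p$-DP set) via Corollary~\ref{ball}.
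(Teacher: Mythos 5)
Your proof is correct and, for parts (i) and (ii), is identical to the paper's: apply Theorem~\ref{pLset} (resp.\ Corollary~\ref{pDPset}) to the identity map. For (iii) and (iv) the paper instead argues by contraposition through Theorems~\ref{TRDP_p} and~\ref{TRDP^*_p} (the identity is $p$-convergent, resp.\ has $p$-convergent adjoint, but is not compact), whereas you invoke Corollary~\ref{ball} to exhibit $B_{X^*}$ (resp.\ $B_X$) explicitly as the non-relatively-compact witness; the two routes rest on the same underlying computation with the identity map, and yours has the small advantage of naming the set.
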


\begin{proof}
 Let $i:X\to X$ be the identity map on $X$. 

(i) Since $X\in WPC_p$, $i$ is weakly-$p$-precompact. Let $A$ be a weakly-$p$-$L$ subset of $X^*$. By Theorem \ref{pLset}, $i^*(A)=A$ is relatively compact. 

(ii) Let $A$ be a weakly-$p$-Dunford Pettis subset of $X$. Since $X^*\in WPC_p$, $i^*:X^*\to X^*$ is  weakly-$p$-precompact. By Corollary \ref{pDPset}, $i(A)=A$ is relatively compact.
 
(iii) Since $X\in C_p$ and $X$ is infinite dimensional, $i$ is $p$-convergent and not compact. Apply Theorem \ref{TRDP_p}. 

(iv) Since $X^*\in C_p$,   $i^*:X^*\to X^*$ is $p$-convergent. Further,  $i$ is not compact. Apply Theorem \ref{TRDP^*_p}.
\end{proof} 


If $1<p<\infty$ and $1<r<p^*$, then the identity map on $\ell_r$ is $p$-convergent (\cite[Corollary 1.7]{CS}). Hence $B_{\ell_{r^*}}$ is a weakly-$p$-$L$-set which is not relatively compact. 


Let $1<p<\infty$.  Then $\ell_{p^*}\in W_p$ \cite{CS}. By Corollary \ref{C27}, every weakly-$p$-$L$-subset of $\ell_p$ is relatively compact. Further, every weakly-$p$-Dunford Pettis subset of $\ell_p$ is relatively compact.

Let $T$ be the Tsirelson's space \cite{CS}. Then   $T^*\in W_p$  and $T\in C_p$. Hence every weakly-$p$-Dunford Pettis subset of $T$ is relatively compact (by Corollary \ref{C27}). Further, $T^*$ contains weakly-$p$-$L$-sets which are not relatively compact.





\begin{corollary} \label{C1}
Let $1\le p< \infty$. 

(i) Suppose $Y$ is a closed subspace of  $X^*$ and $X$ has the $RDP_p$. Then  $Y$ has property $RDP_p^*$.  


(ii)  If   $Y^*$ has the $RDP_p$, then $Y$ has property $RDP_p^*$.  
\end{corollary}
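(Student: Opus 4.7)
For part (i), my plan is to transfer the weakly-$p$-Dunford-Pettis subset of $Y$ into a weakly-$p$-$L$-subset of $X^*$, apply $RDP_p$ of $X$ to get relative weak compactness in $X^*$, and then observe that since $Y$ is a closed subspace of $X^*$, the weak topology on $Y$ is the restriction of the weak topology of $X^*$, so relative weak compactness is preserved. Concretely, given $A \subseteq Y$ weakly-$p$-DP, I will view $A$ as sitting inside $X^*$ via the inclusion and verify the weakly-$p$-$L$ condition there.

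The key step is to use the natural evaluation map $J : X \to Y^*$ defined by $J(x)(y) = \langle y, x \rangle$ for $y \in Y \subseteq X^*$. This is a bounded operator with $\|J\| \le 1$, and every bounded operator maps $\ell_p^w(X)$ continuously into $\ell_p^w(Y^*)$ (this is immediate from the definition of the $\|\cdot\|_p^w$ norm via the dual). Thus, given any weakly-$p$-summable sequence $(x_n)$ in $X$, the sequence $(J(x_n))$ is weakly-$p$-summable in $Y^*$, and the weakly-$p$-Dunford-Pettis property of $A$ in $Y$ yields
$$\sup_{y \in A} |y(x_n)| \; = \; \sup_{y \in A} |J(x_n)(y)| \; \longrightarrow \; 0.$$
This says precisely that $A$ is a weakly-$p$-$L$-subset of $X^*$, so $RDP_p$ of $X$ gives relative weak compactness.

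For part (ii), the plan is simply to apply (i) with $X$ replaced by $Y^*$: the canonical embedding $Y \hookrightarrow Y^{**} = (Y^*)^*$ realizes $Y$ as a closed subspace of the dual of $Y^*$, and by hypothesis $Y^*$ has $RDP_p$, so (i) gives property $RDP_p^*$ for $Y$.

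I do not anticipate a serious obstacle here; the argument is essentially a definitional translation together with the bounded-operator-preserves-weakly-$p$-summability observation. The only subtle point to flag is the legitimacy of transferring relative weak compactness from $X^*$ down to the closed subspace $Y$, which is justified by the Hahn-Banach theorem (every functional on $Y$ extends to $X^*$), ensuring the weak topology of $Y$ coincides with the subspace topology inherited from the weak topology of $X^*$.
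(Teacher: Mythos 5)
Your proposal is correct and follows essentially the same route as the paper: the paper passes from weakly-$p$-DP in $Y$ to weakly-$p$-DP in $X^*$ to weakly-$p$-$L$ in $X^*$ and then invokes $RDP_p$ of $X$, which is exactly your argument with the two transfer steps merged into one via the map $J:X\to Y^*$. Your explicit justification of the final step (weak closedness of $Y$ and the Hahn--Banach compatibility of the weak topologies) is a point the paper leaves implicit, but it is the standard fact both proofs rely on.
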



\begin{proof} 
(i) Let $K$ be a weakly-$p$-DP subset of $Y$. Then $K$ is a weakly-$p$-DP subset of $X^*$, and thus  a weakly-$p$-$L$-subset of $X^*$.  Hence $K$ is relatively weakly compact. Then  $Y$ has property $RDP_p^*$.

(ii) Consider $Y$ a closed subspace of $Y^{**}$ and apply (i). 
\end{proof} 

The converse of Corollary \ref{C1} (i) is not true. 
Let $X$ be the first Bourgain-Delbaen space \cite{JBFD}. Then $X$ is an infinite dimensional $\mathcal{L}_\infty$-space with  the Schur property and $X^*$ is weakly sequentially complete. Since $X$ has the Schur property, the identity map $i$ on $X$ is completely continuous, hence $p$-convergent ($1<p< \infty$), and not weakly compact. Thus $X$ does not have property $RDP_p$. Since $X^*$ is weakly sequentially complete, $X^*$ has property $RDP^*_p$ by Corollary \ref{Cwsc}.


\begin{corollary}\label{pconvadjoint}
Let $1\le p< \infty$.

(i) Suppose $F$ is a closed subspace of  $Z^*$ and $Z$ has the $RDP_p$. If  $T:E\to F$ is an operator such that $T^*:F^*\to E^*$ is $p$-convergent, then $T$ is weakly compact.

(ii) Suppose $F^*$ has the $RDP_p$. If  $T:E\to F$ is an operator such that $T^*:F^*\to E^*$ is $p$-convergent, then $T$ is weakly compact.
\end{corollary}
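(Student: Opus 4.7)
The plan is to recognize this as an immediate combination of two previously established results: Corollary \ref{C1} and Theorem \ref{TRDP^*_p}. The main observation is that the hypotheses on $Z$ and $F^*$ in parts (i) and (ii) are precisely what Corollary \ref{C1} needs in order to conclude that the range space $F$ itself has property $RDP_p^*$; once this is in hand, Theorem \ref{TRDP^*_p} (in its relatively weakly compact formulation) translates property $RDP_p^*$ of $F$ into exactly the statement that any operator into $F$ with $p$-convergent adjoint is weakly compact.

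In detail, for part (i): since $F$ is a closed subspace of $Z^*$ and $Z$ has property $RDP_p$, Corollary \ref{C1}(i) yields that $F$ has property $RDP_p^*$; that is, every weakly-$p$-Dunford–Pettis subset of $F$ is relatively weakly compact. Now let $T:E\to F$ be an operator with $T^*:F^*\to E^*$ $p$-convergent. By the equivalence $(iii)\Rightarrow(i)$ of Theorem \ref{TRDP^*_p} (applied with $X=F$ and $Y=E$) in the relatively weakly compact case, $T$ is weakly compact.

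For part (ii) the argument is identical: $F$ is a closed subspace of $F^{**}$, so Corollary \ref{C1}(ii) (or equivalently (i) applied to $Z=F^*$) gives that $F$ has property $RDP_p^*$, and Theorem \ref{TRDP^*_p} again delivers the weak compactness of $T$.

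There is essentially no obstacle here since both parts are direct consequences of results already proved earlier in the section; the only care needed is to match the roles of the spaces correctly when invoking Theorem \ref{TRDP^*_p} (the space with property $RDP_p^*$ must be the codomain of $T$, which is exactly $F$ in our setup).
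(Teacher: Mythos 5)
Your proof is correct and follows exactly the paper's own argument: Corollary \ref{C1} gives that $F$ has property $RDP_p^*$, and then Theorem \ref{TRDP^*_p} (relatively weakly compact case, with $X=F$, $Y=E$) yields the weak compactness of $T$; part (ii) reduces to part (i) via $F\subseteq F^{**}$ and $Z=F^*$, just as in the paper.
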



\begin{proof} (i) By Corollary \ref{C1},  $F$ has  property  $RDP^*$. 
Let  $T:E\to F$ be an operator such that $T^*:F^*\to E^*$ is $p$-convergent. By Theorem \ref{TRDP^*_p}, $T$ is weakly compact. 


(ii)  Consider $F$ a subspace of $F^{**}$ and $Z=F^*$. Apply (i).
\end{proof}

In the following two results we need the following result.

\begin{lemma} (\cite[Theorem 2.7]{GO}) \label{GonzalezOnieva}
Let $X$ be a Banach space,  $Y$  a reflexive subspace of $X$ (resp. a  subspace  not containing copies of $\ell_1$), and $Q:X\to X/Y$ the  quotient map. Let $(x_n)$ be a bounded sequence in $X$ such that $(Q(x_n))$ is weakly convergent (resp.   weakly Cauchy). 
 Then $(x_n)$ has a weakly convergent  subsequence (resp. weakly Cauchy). 
\end{lemma}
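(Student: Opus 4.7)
My plan is to treat both cases through Rosenthal's $\ell_1$-theorem, adding a bidual argument in the reflexive case to upgrade weak Cauchyness to weak convergence. Since reflexivity of $Y$ forces $\ell_1\not\hookrightarrow Y$, and weak convergence implies weak Cauchyness, both hypotheses reduce to the following common claim, which I would prove first: \emph{if $Y$ contains no copy of $\ell_1$ and $(Q(x_n))$ is weakly Cauchy, then $(x_n)$ admits a weakly Cauchy subsequence.} This already settles the second statement of the lemma; for the first statement only the lifting step remains.

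For the common claim I would invoke Rosenthal's $\ell_1$-theorem: if $(x_n)$ has no weakly Cauchy subsequence, then some subsequence $(x_{n_k})$ is equivalent to the unit vector basis of $\ell_1$. Passing to the pairwise differences $z_k = x_{n_{2k}} - x_{n_{2k-1}}$, which remain equivalent to the $\ell_1$-basis in $X$, the images $(Q(z_k))$ become weakly null in $X/Y$. An iterated application of Mazur's theorem then yields disjoint finite blocks $F_1 < F_2 < \dots$ and nonnegative coefficients $\{\lambda_k^n\}_{k\in F_n}$ summing to $1$ such that $v_n = \sum_{k\in F_n}\lambda_k^n z_k$ satisfies $\|Q(v_n)\|\to 0$. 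Convex blocks of an $\ell_1$-basis remain seminormalized and equivalent to the $\ell_1$-basis; using $\|Q(v_n)\|=\operatorname{dist}(v_n,Y)$, I would choose $y_n\in Y$ with $\|v_n - y_n\|\to 0$. The small perturbation lemma then forces $(y_n)$ to be equivalent to the $\ell_1$-basis inside $Y$, contradicting $\ell_1\not\hookrightarrow Y$. This passage from weak nullity of $(Q(z_k))$ to genuine norm smallness via convex blocks is the main technical obstacle, since weak nullity alone does not imply $\|Q(z_k)\|\to 0$.

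In the reflexive case it remains to upgrade the weakly Cauchy subsequence, still denoted $(x_n)$, to a weakly convergent one. Let $\phi\in X^{**}$ be its $w^*$-limit, and write the weak limit of $(Q(x_n))$ as $z=Q(x_0)$ for some $x_0\in X$. By $w^*$-$w^*$ continuity of $Q^{**}:X^{**}\to (X/Y)^{**}$ together with uniqueness of $w^*$-limits, $Q^{**}(\phi) = z = Q^{**}(x_0)$, so $\phi - x_0 \in \ker Q^{**}$. Since $Q^*$ is an isometric embedding onto $Y^{\perp}\subseteq X^*$, one has $\ker Q^{**} = (Y^{\perp})^{\perp} = Y^{**}$ under the canonical embedding $Y^{**}\hookrightarrow X^{**}$; reflexivity yields $Y^{**}=Y\subseteq X$, so $\phi\in X$ and consequently $x_n\to \phi$ weakly in $X$. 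The remaining ingredients---Rosenthal's theorem, the small perturbation lemma, and the standard Hahn--Banach computation $\ker Q^{**} = (Y^{\perp})^{\perp}$---are routine.
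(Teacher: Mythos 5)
This lemma is imported by the paper from Gonz\'alez--Onieva \cite[Theorem 2.7]{GO} and is stated without any proof, so there is no in-paper argument to compare against; what can be said is that your proposal is a correct, self-contained proof. The common claim is handled properly: Rosenthal's theorem produces an $\ell_1$-basis-equivalent subsequence, the pairwise differences $z_k$ stay equivalent to the $\ell_1$-basis while $(Q(z_k))$ becomes weakly null, Mazur's theorem yields disjoint convex blocks $v_n$ with $\|Q(v_n)\|=\operatorname{dist}(v_n,Y)\to 0$, and since disjoint convex blocks of an $\ell_1$-basis remain seminormalized and $\ell_1$-basis-equivalent, the principle of small perturbations (after passing to a subsequence so that $\sum\|v_n-y_n\|$ is small relative to the lower $\ell_1$-constant) plants a copy of $\ell_1$ inside $Y$, the desired contradiction. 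The bidual upgrade in the reflexive case is also sound: $Q^{**}$ is $w^*$-$w^*$ continuous, $\ker Q^{**}=(\operatorname{ran}Q^*)^{\perp}=(Y^{\perp})^{\perp}$, and this bipolar is the $w^*$-closure of $Y$ in $X^{**}$, which equals $Y$ itself because reflexivity makes $B_Y$ $w^*$-compact; hence the $w^*$-cluster point $\phi$ lies in $x_0+Y\subseteq X$ and the weakly Cauchy subsequence converges weakly to it. This is essentially the standard route to such lifting results, and I see no gap.
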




\begin{theorem} 
Let $1< p<\infty$.
 Let $X$ be a Banach space and $Y$ be a reflexive subspace of $X$.  If $X/Y$ has the $RDP_p^*$ property, then $X$ has the $RDP_p^*$ property.
\end{theorem}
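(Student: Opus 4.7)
The plan is to push the weakly-$p$-Dunford-Pettis property down through the quotient map $Q : X \to X/Y$ and then lift weak compactness back up using the Gonz\'alez-Onieva lemma (Lemma \ref{GonzalezOnieva}) above.

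First I would take any weakly-$p$-Dunford-Pettis subset $A$ of $X$ and show that $Q(A)$ is a weakly-$p$-Dunford-Pettis subset of $X/Y$. The key computation is routine: if $(z_n^*)$ is weakly $p$-summable in $(X/Y)^*$, then $(Q^*(z_n^*))$ is weakly $p$-summable in $X^*$ (since $Q^*$ is bounded and weak-to-weak continuous, so it sends $\ell_p^w((X/Y)^*)$ into $\ell_p^w(X^*)$), and for any $x\in A$ we have $z_n^*(Q(x)) = Q^*(z_n^*)(x)$, so
$$\sup_{x\in A}|z_n^*(Q(x))| = \sup_{x\in A}|Q^*(z_n^*)(x)| \to 0$$
because $A$ is weakly-$p$-Dunford-Pettis in $X$.

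Next, since $X/Y$ has property $RDP_p^*$, the set $Q(A)$ is relatively weakly compact in $X/Y$. To finish, I would take an arbitrary sequence $(x_n)$ in $A$; by relative weak compactness of $Q(A)$ we may pass to a subsequence so that $(Q(x_n))$ is weakly convergent in $X/Y$. Since $(x_n)$ is bounded in $X$ and $Y$ is reflexive, Lemma \ref{GonzalezOnieva} yields a further subsequence of $(x_n)$ which is weakly convergent in $X$. Thus every sequence in $A$ has a weakly convergent subsequence, i.e., $A$ is relatively weakly compact, and $X$ has the $RDP_p^*$ property.

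The only real content is the verification that $Q(A)$ inherits the weakly-$p$-Dunford-Pettis property; everything else is a direct application of the hypothesis and the cited lemma. There is no serious obstacle, since the needed lifting lemma is quoted verbatim and the pull-back of weak $p$-summability under $Q^*$ is automatic.
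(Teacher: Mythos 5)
Your proposal is correct and follows essentially the same route as the paper: push the weakly-$p$-Dunford-Pettis set forward through the quotient map, use the $RDP_p^*$ hypothesis on $X/Y$, and lift weak convergence back via Lemma \ref{GonzalezOnieva}. The only difference is that you spell out the (routine) verification that $Q(A)$ is weakly-$p$-Dunford-Pettis, which the paper states without proof.
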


\begin{proof} 
Let  $Q:X\to X/Y$ be the quotient map.    Let $A$  be a  weakly-$p$-DP subset of $X$ and $(x_n)$ be a sequence in $A$. Then $(Q(x_n))$ is a  weakly-$p$-DP subset of $X/Y$, and thus relatively weakly compact. By passing to a subsequence, suppose $(Q(x_n))$ is weakly convergent.
By Lemma \ref{GonzalezOnieva}, $(x_n)$ has a weakly convergent subsequence. 
\end{proof}




Let  $E$ be a Banach space and  $F$ be a  subspace of $E^*$. Let  
$$^{\perp}F=\{ x\in E: y^*(x)=0 \, \, \text{for all} \, y^*\in F \}.$$

\begin{theorem} \label{c1}
Let $1< p<\infty$.
Let  $E$ be a Banach space and $F$ be a  reflexive subspace of $E^*$ (resp. a subspace not containing $\ell_1$).  If $^{\perp}F$ has property $RDP_p$ (resp. $wRDP_p$), then  $E$ has property $RDP_p$ (resp. $wRDP_p$).
\end{theorem}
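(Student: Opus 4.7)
The plan is to reduce to the characterization of $RDP_p$ (resp.\ $wRDP_p$) given by Theorem \ref{TRDP_p}: it suffices to show that every weakly-$p$-$L$-subset $A$ of $E^*$ is relatively weakly compact (resp.\ weakly precompact). So given a sequence $(x_n^*)\subseteq A$, the goal is to extract a weakly convergent (resp.\ weakly Cauchy) subsequence.

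The central object is the restriction operator $R:E^*\to ({}^{\perp}F)^*$, $R(x^*)=x^*|_{{}^{\perp}F}$. Any weakly $p$-summable sequence $(y_n)$ in ${}^{\perp}F$ is also weakly $p$-summable in $E$, so
$$\sup_{x^*\in A}|R(x^*)(y_n)|=\sup_{x^*\in A}|x^*(y_n)|\to 0,$$
showing that $R(A)$ is a weakly-$p$-$L$-subset of $({}^{\perp}F)^*$. The hypothesis on ${}^{\perp}F$ then forces $R(A)$ to be relatively weakly compact (resp.\ weakly precompact). Passing to a subsequence, $(R(x_n^*))$ is weakly convergent (resp.\ weakly Cauchy) in $({}^{\perp}F)^*$.

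To transfer this back to $E^*$, I would factor $R$ as $R=\tilde R\circ Q$, where $Q:E^*\to E^*/F$ is the quotient map and $\tilde R:E^*/F\to ({}^{\perp}F)^*$ is the induced operator, and then apply Lemma \ref{GonzalezOnieva} with $X=E^*$ and $Y=F$. The key intermediate fact for the reflexive case is that $F$ is automatically weak-$*$ closed in $E^*$: the weak topology $\sigma(F,F^*)$ on $F$ is finer than the restricted weak-$*$ topology $\sigma(F,E/{}^{\perp}F)$ (because $E/{}^{\perp}F$ embeds into $F^*$ via evaluation), so reflexivity of $F$ gives $B_F$ weak-$*$ compact in $E^*$, and the Krein--Smulian theorem then gives that $F$ itself is weak-$*$ closed. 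Consequently $\ker R=({}^{\perp}F)^{\perp}=F$, so $\tilde R$ is an isomorphism by the open mapping theorem, and $(Q(x_n^*))=\tilde R^{-1}(R(x_n^*))$ is weakly convergent in $E^*/F$. Lemma \ref{GonzalezOnieva} now produces a weakly convergent subsequence of $(x_n^*)$.

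The hard part I anticipate is the $wRDP_p$ case. A subspace of $E^*$ not containing $\ell_1$ need not be weak-$*$ closed (witness $c_0\subseteq \ell_\infty$), so $\tilde R$ can fail to be injective and the clean identification $E^*/F\simeq ({}^{\perp}F)^*$ is unavailable. I would try to bridge the gap either by verifying directly that $(Q(x_n^*))$ remains weakly Cauchy in $E^*/F$, which amounts to showing that $\phi(x_n^*)$ is Cauchy for every $\phi\in F^{\perp}\subseteq E^{**}$ starting from the weaker fact that this holds for every $\phi\in i^{**}(({}^{\perp}F)^{**})$ (with $i:{}^{\perp}F\hookrightarrow E$), or by applying Lemma \ref{GonzalezOnieva} with $Y$ taken to be the weak-$*$ closure $({}^{\perp}F)^{\perp}$ after showing this enlargement still fails to contain $\ell_1$. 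Resolving this transfer step, which sidesteps the reflexivity of $F$, is where the technical work of the proof concentrates.
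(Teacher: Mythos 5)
Your argument for the reflexive/$RDP_p$ case is correct and is essentially the paper's proof in dual form: the paper phrases it through the inclusion $S:{}^{\perp}F\to E$ (whose adjoint is your restriction map $R=\tilde R\circ Q$) and the operator characterization of Theorem \ref{TRDP_p}, but the substance --- restrict a weakly-$p$-$L$-set to $({}^{\perp}F)^*$, use the hypothesis there, identify $({}^{\perp}F)^*$ with $E^*/F$, and lift by Lemma \ref{GonzalezOnieva} --- is the same. If anything you are more careful than the paper: the identification $E^*/F\simeq({}^{\perp}F)^*$ requires $F=({}^{\perp}F)^{\perp}$, i.e.\ that $F$ be weak-$*$ closed, and you supply the Krein--Smulian argument for this, whereas the paper simply asserts the ``natural surjective isomorphism.''

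The $wRDP_p$ case, however, is a genuine gap in your proposal: you correctly locate the obstruction, but neither of your two suggested repairs can be carried out, because the statement itself breaks down there. Take $E=\ell_1$ and $F=c_0\subseteq\ell_\infty=E^*$. Then $F$ contains no copy of $\ell_1$, but $F$ separates the points of $\ell_1$, so ${}^{\perp}F=\{0\}$, which has $wRDP_p$ vacuously; moreover $({}^{\perp}F)^{\perp}=\ell_\infty$, so the weak-$*$ closure both contains $\ell_1$ (killing your second repair) and absorbs all the information the restriction map carries (your first repair yields Cauchyness only against $i^{**}(({}^{\perp}F)^{**})=\{0\}$). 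Meanwhile $E=\ell_1$ fails $wRDP_p$: by the Schur property every weakly $p$-summable sequence in $\ell_1$ is norm null, so $B_{\ell_\infty}$ is a weakly-$p$-$L$-subset of $E^*$, and it is not weakly precompact since $\ell_1\hookrightarrow\ell_\infty$. So no argument can close this case as stated. The paper's own proof writes out only the reflexive case and dismisses the other as ``similar,'' which conceals exactly the failure of injectivity of $E^*/F\to({}^{\perp}F)^*$ that you identified.
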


\begin{proof} We will only prove the result for property $RDP_p$. The other proof is similar.

Suppose that $^{\perp}F$ has property $RDP_p$. Let $Q:E^*\to E^*/F$ be the quotient map and $i:E^*/F \to (^{\perp}F)^*$ be the natural surjective isomorphism \cite[Theorem 1.10.16]{M}. It is known that $iQ:E^* \to (^{\perp}F)^*$ is $w^*-w^*$ continuous, since $iQ(x^*)$ is the restriction of $x^*$ to $^{\perp}F$ \cite[Theorem 1.10.16]{M}. Then there is an operator $S:^{\perp} F \to E$   such that $iQ=S^*$.

Let $T:E\to X$ be a $p$-convergent operator. Since $^{\perp} F$ has property $RDP_p$ and  the operator $TS:^{\perp} F\to X$ is $p$-convergent, it is  weakly compact. Since $S^*T^*=iQT^*$ is weakly compact and $i$ is a surjective isomorphism, $QT^*$ is weakly compact. Let $(x_n^*)$ be a sequence in $B_{X^*}$.   By passing to a subsequence, we  can assume that $(QT^*(x_n^*))$ is weakly convergent. Hence $(T^*(x_n^*))$ has a weakly convergent subsequence by Lemma \ref{GonzalezOnieva}. Then $E$ has property $RDP_p$ by Theorem \ref{TRDP_p}.
\end{proof}



\end{document}